\documentclass[12 pt,a4paper,reqno]{amsart}
\usepackage{amssymb,latexsym,amsmath,amscd,amsthm,amsfonts, enumerate}
\usepackage{multirow}
\usepackage{color}
\usepackage[all]{xy}
\usepackage{etex}
\usepackage{caption}
\usepackage{soul}
\usepackage{float}
\usepackage{titletoc}
\usepackage[normalem]{ulem}
\usepackage{graphicx}
\usepackage[usenames,dvipsnames]{xcolor}
\usepackage{url}

\usepackage{tikz,tikz-cd}
\usepackage{mathrsfs}
\usepackage[all]{xy}
\usepackage{tikz}
\usepackage{extarrows}
\usepackage{tikz-cd}
\usetikzlibrary{calc}
\usetikzlibrary{matrix,arrows,decorations.pathmorphing}
\usetikzlibrary{snakes}
\usetikzlibrary{shapes.geometric,positioning}
\usetikzlibrary{arrows,decorations.pathmorphing,decorations.pathreplacing}
\usetikzlibrary{positioning,shapes,shadows,arrows,snakes}

\usepackage[colorlinks=true,pagebackref,hyperindex]{hyperref}
\newcommand\myshade{85}
\colorlet{mylinkcolor}{violet}
\colorlet{mycitecolor}{red}
\colorlet{myurlcolor}{cyan}

\hypersetup{
  linkcolor  = mylinkcolor!\myshade!black,
  citecolor  = mycitecolor!\myshade!black,
  urlcolor   = myurlcolor!\myshade!black,
  colorlinks = true,
}

\numberwithin{equation}{section}

\newtheorem{theorem}{Theorem}[section]
\newtheorem{proposition}[theorem]{Proposition}
\newtheorem{proposition-definition}[theorem]{Proposition-Definition}

\newtheorem{corollary}[theorem]{Corollary}
\newtheorem{lemma}[theorem]{Lemma}

\theoremstyle{definition}
\newtheorem{remark}[theorem]{Remark}
\newtheorem{example}[theorem]{Example}
\newtheorem{definition}[theorem]{Definition}

\definecolor{dark-green}{RGB}{14,150,2}
\definecolor{red}{RGB}{250,0,0}

\begin{document}

\title[Noncommutative Dominated Convergence Theorem]{Measure Convergence under Weak Domination for Unitarily Invariant Norms on Finite von Neumann Algebras  }

\author{Yanni Chen}
\address{School of Mathematics and Statistics, Shaanxi Normal University,
Xi'an 710119, China }
\email{yanni.chen@snnu.edu.cn}
\author{Don Hadwin}
\address{Department of Mathematics, University of New Hampshire, Durham, NH 03824, U.S.A.}
\email{don@unh.edu}
\author{Junhao Shen}
\address{Department of Mathematics, University of New Hampshire, Durham, NH 03824, U.S.A.}
\email{junhao.shen@unh.edu}
\author{Ye Zhang}
\address{School of Mathematics and Statistics, Shaanxi Normal University,
Xi'an 710119, China }
\email{zhangye@snnu.edu.cn}

{\keywords{Unitarily invariant norm, Nelson's convergence in measure, Borel functional calculus, finite von Neumann algebra, equimeasurable,  automorphisms.}


\date{\today}

\subjclass[2010]{46L52, 46E30 
}

\begin{abstract}
The classical Lebesgue dominated convergence theorem (DCT) is a cornerstone in commutative measure theory, while its extension to the noncommutative setting has remained a longstanding challenge due to the barriers posed by the  noncommutative structure.  Let \(\alpha\) be a unitarily invariant norm defined on a diffuse tracial von Neumann algebra \((\mathcal{M}, \tau)\), let $L^\alpha(\mathcal M, \tau)$ be  the completion of $\mathcal M$ with respect to the norm $\alpha$(which includes all $L^p(\mathcal M, \tau)$ as special cases). In this paper,  by replacing ``almost everywhere convergence"  with Nelson's convergence in measure, we establish a noncommutative analog of the dominated convergence theorem for $L^\alpha(\mathcal M, \tau)$  under a { significantly weaker dominating condition} than those previously considered. For instance, in a \(II_1\) factor \(\mathcal{M}\), if \(P\) is a projection with \(\tau(P)=\frac{1}{n}\), then \(n^2 P\) ``dominates" every operator of norm \(\leq n\).   As
an application, we obtain a complete characterization of center-preserving $*$-automorphism of finite von Neumann algebras, which unifies and extends the earlier works of Kadison and Haagerup on the structure of automorphisms.

\end{abstract}

\maketitle
\setcounter{tocdepth}{1}

\tableofcontents

\section{Introduction}\label{Introductions}

Driven by the specific analytic situations encountered in group theory, in operator theory and in quantum field theory,  Irving Segal\cite{Segal} introduced the notion of noncommutative integration  on a von Neumann algebra in 1953. His foundational work was subsequently advanced by  Kunze\cite{Kunze}and Stinepring\cite{Stinespring}.  Haagerup\cite{Haagerup2} in his landmark paper established the theory of noncommutative $L^p$-spaces, in which the classical $L^p$-spaces are replaced by spaces of measurable operators affiliated with a von Neumann algebra $\mathcal M$ equipped with a faithful normal tracial state $\tau.$ Since then, the theory has been extensively developed and applied by Fack, Kosaki, Pisier, Junge, Xu, Sukochev and many others(see \cite{BM}, \cite{BCO},  \cite{Dodds}, \cite{HLW}, \cite{Junge}, \cite{Kosaki}, \cite{MW}, \cite{SSZ}, \cite{Xu}). The recent survey by Pisier and Xu\cite{PX} presents a  complete picture on noncommutative integration and contains a lot of references. This theory is still a very active subject of investigation.

The Lebesgue Dominated Convergence Theorem (DCT)\cite{Rudin} is one of the most fundamental and frequently used theorem in commutative measure theory. It asserts that if a sequence of measurable functions $\{f_n\}$ converges pointwise almost everywhere to $f,$ and if there exists an integrable function g such that $ |f_n| \leq g $ for all $n\in \mathbb N$, then
$$\lim_{n\rightarrow \infty}\int f d\mu=\int\lim_{n\rightarrow \infty}f_n \ d\mu.$$
The following picture can well describe this principle:

\vspace{1mm}
{
 \ \ \quad\quad\quad \includegraphics[height=4.5cm,    width=12.5cm]{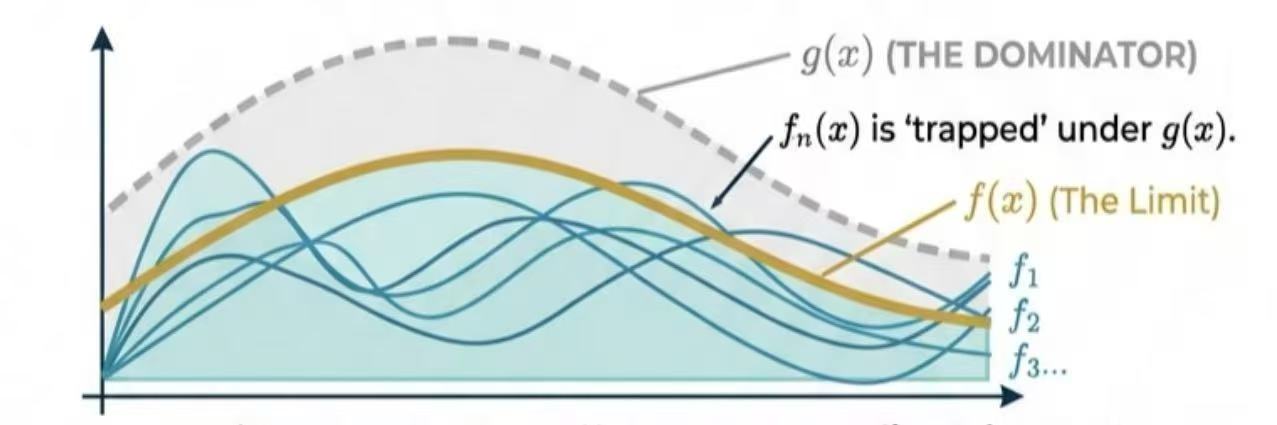}
 \vspace{1mm}}

Even as the sequence of functions varies randomly, an invisible ``ceiling" function $g$ prevents the total area from ``escaping" or collapsing. Owing to its powerful and importance, this theorem  has become an indispensable tool in probability theory, harmonic analysis, partial differential equations, and virtually every branch of applied mathematics where limits and integrals interact(see \cite{CoenZoli}, \cite{GhoshMondal}, \cite{MHT}).

In this context, transferring  classical measure-theoretic results to the noncommutative setting has become a central research topic. A natural and long-standing problem therefor arises: {Can the classical DCT be extended to the noncommutative setting? If so, what is the appropriate notion of  convergence mode, and what form does the  dominating operator take?}

Unitarily invariant norms, which include the classical $L^p$-norms, Ky Fan norms, and Orlicz norms were introduced by von Neumann\cite{vNeumann} for the purpose of metrizing matrix spaces. These norms are widely utilized in operator theory and quantum information theory, owing to their invariance under unitary conjugations ($\|U^*AU\| =\| A\|$ for any unitary  $U$),  reflecting the rotational symmetry of operators independent of their representation. Later, Fack and Kosaki\cite{FK} generalized the notion of singular values ($s$-numbers) to measurable operators, providing a powerful tool for studying unitarily invariant norms on noncommutative $L^p$-spaces. In 2008, Fang, Hadwin, Nordgren, and Shen\cite{FHNS} systematically studied tracial gauge norms on finite von Neumann algebras, establishing connections between unitarily invariant norms and the weak Dixmier property.

Motivated by the relation between von Neumann's theorems and the analogies between noncommutative and classical $L^p$-spaces, in this paper, we consider noncommutative $L^p$-spaces associated with unitarily invariant norms on a finite von Neumann algebra $\mathcal M$ and prove a noncommutative version of Lebesgue dominated convergence theorem.

 However, extending  the classical DCT to this noncommutative setting encounters several fundamental challenges:
\begin{enumerate}
 \item Absence of pointwise evaluation. For unbounded measurable operators, there is no direct analogue of ``almost everywhere pointwise convergence." The classical notion must be replaced by a suitable operator-topological convergence.

\item Noncommutativity. The commutative structure of functions is lost; operators do not commute, and inequalities between operators must be interpreted in the sense of positive elements.

 \item Dominating conditions. Finding an appropriate noncommutative analogue of the dominating integrable function is non-trivial, as there is no pointwise order structure.

 \item Lack of lattice properties for unitarily invariant norms. Unlike the classical $L^p$-norms, the class of unitarily invariant norms does not enjoy a lattice monotonicity. The classical proofs for the $L^p$-case rely on the $L^2$-result and take cases when $p\leq 2$ and $2<p$ have no analogue, hence the tools and techniques developed for $L^p$-spaces are no longer available.
\end{enumerate}

For convenience of description, let us recall some basic notations. A finite von Neumann algebra $(\mathcal M, \tau)$ is a von Neumann algebra $\mathcal M$ with a faithful normal tracial state $\tau.$ A unitarily invariant norm $\alpha$  on a finite von Neumann algebra  $\mathcal M$ is a norm satisfying $\alpha(UTV)=\alpha(T)$ for all $T\in \mathcal M$ and  all unitaries $U, V\in \mathcal M.$  We let $L^\alpha(\mathcal M, \tau)$ be  the completion of $\mathcal M$ with respect to the norm $\alpha.$ In 1974, E. Nelson\cite{Nelson} defined the topology of convergence in measure on   $\mathcal M.$  This serves as a natural noncommutative analog of classical convergence in measure, which is strictly weaker than pointwise almost-everywhere convergence.  We  let $L^0(\mathcal M, \tau)$ be the completion of  $\mathcal M$ with respect to this topology, which consists of all $\tau$-measurable operators. For any $0<p<\infty,$ noncommutative $L^p(\mathcal M, \tau)$ can be injected into $L^0(\mathcal M, \tau).$

 The primary objective of this work is to overcome the above technical barriers and establish a universal framework that  covers all situations with significantly weaker hypotheses--both on the convergence mode and on the  dominating condition. In the noncommutative setting, several analogs of  ``almost everywhere convergence" have been proposed: Lance\cite{Lance} invented the uniform convergence, and Yeadon\cite{Yeadon} introduced the bilateral almost uniform convergence. In this paper, we first employ Nelson's sense of convergence in measure on finite von Neumann algebras as the replacement of the classical pointwise almost everywhere convergence,  and we establish a fully generalized dominated convergence theorem for $L^\alpha(\mathcal M, \tau).$
 Second, we introduce a relaxed notion of the  ``dominating operators" as $\alpha$-dominating set, which is summarized as below(see Definition \ref{equicontinuous} and Definition \ref{alpha dominating set}):

\begin{definition}\label{equicontinuous}
Suppose \(\alpha\) is a continuous normalized unitarily invariant norm on \(\mathcal{M}\). Then
\begin{enumerate}
\item\(\mathcal{S} \subset L^{\alpha}(\mathcal{M}, \tau)\) is called \(\alpha\)-$equicontinuous,$ if for every \(\varepsilon > 0\), there is a \(\delta > 0\) such that, for every \(S \in \mathcal{S}\) and every projection \(P \in \mathcal{M}\),
\[
\tau(P) < \delta \Longrightarrow \alpha(PS) \leq \varepsilon.
\]

\item  The set of $\alpha$-dominating functions \(d_{\mathcal{S}}: (0, 1] \rightarrow (0, 1]\)  is defined by
\[
d_{\mathcal{S}}(\varepsilon) = \sup \left\{ 0 < \delta \leq 1 : \forall S \in \mathcal{S},\  \forall P \in \mathcal{P}(\mathcal{M}), \ [\tau(P) < \delta \Longrightarrow \alpha(PS) \leq \varepsilon] \right\}.
\]
\item An element \(T \in \mathcal{L}^{0}(\mathcal{M}, \tau)\) is \(\alpha\)-dominated by \(\mathcal{S}\) if and only if
\[
\forall P \in \mathcal{P}(\mathcal{M}), \quad [\tau(P) < d_{\mathcal{S}}(\varepsilon) \Longrightarrow \alpha(PT) \leq \varepsilon],
\]
\item The collection of ``dominating operators", denoted by $\mathcal{D}_{\alpha}(\mathcal{S}),$ is defined  to be
 $$\mathcal{D}_{\alpha}(\mathcal{S}) = \mathcal{D}_{\alpha}(\mathcal{S}, \mathcal{M}) = \left\{ T \in \mathcal{L}^{0}(\mathcal{M}, \tau) : T \text{ is \(\alpha\)-dominated by \(\mathcal{S}\)} \right\}.$$
\end{enumerate}
\end{definition}
Such $\mathcal{D}_{\alpha}(\mathcal{S})$  serves as a powerful notion of domination that is substantially weaker than the classical pointwise domination.

Based on the equivalent characterizations of Nelson's convergence in measure topology (see Lemma \ref{Convergence in measure Lemma}), we construct a well-defined Borel functional calculus for normal operators in $L^0(\mathcal M, \tau)$ (See Theorem \ref{Borel functional calculus}). Then together with the connection between the $\alpha$-convergence and  convergence in measure on $\mathcal{D}_{\alpha}(\mathcal{S})$ (See Theorem \ref{alpha convergence implies in measure}), we obtain the following main result (see Theorem \ref{DCT}).

\begin{theorem}\label{DCT}
Suppose \(\mathcal{M}\) is a diffuse von Neumann algebra with a faithful normal trace \(\tau\), \(\alpha\) is a continuous normalized unitarily invariant norm on \(\mathcal{M}\), and \(\mathcal{S} \subset L^{\alpha}(\mathcal{M}, \tau)\) is \(\alpha\)-equicontinuous. Then

\begin{enumerate}
    \item \(\mathcal{S} \subset \mathcal{D}_{\alpha}(\mathcal{S})\).
    \item \(\mathcal{D}_{\alpha}(\mathcal{S})\) is convex.
\end{enumerate}

\begin{enumerate}
    \item[(3)] If \(T \in \mathcal{D}_{\alpha}(\mathcal{S})\) and \(A, B \in \mathcal{M}\) with \(\|A\|, \|B\| \leq 1\), then \(ATB \in \mathcal{D}_{\alpha}(\mathcal{S})\).

    \item[(4)] If \(S, S_{1}, S_{2}, \ldots \in \mathcal{D}_{\alpha}(\mathcal{S})\), then \(S_{n} \to S\) in measure if and only if
    \[
    \alpha(S_{n} - S) \to 0.
    \]

    \item[(5)] The \(\alpha\)-topology and the topology of convergence in measure coincide on \(\mathcal{D}_{\alpha}(\mathcal{S})\).

    \item[(6)] \(\mathcal{D}_{\alpha}(\mathcal{S}) \subset L^{\alpha}(\mathcal{M},\tau)\).

    \item[(7)] \(\mathcal{D}_{\alpha}(\mathcal{S})\) is closed in convergence in measure and in the \(\alpha\)-topology.

    \item[(8)] If \(\gamma\) is a \(\ast\)-automorphism of \(\mathcal{M}\) such that \(\tau \circ \gamma = \tau\) and \(\alpha \circ \gamma = \alpha\), then \(\gamma(T) \in \mathcal{D}_{\alpha}(\mathcal{S})\) whenever \(T \in \mathcal{D}_{\alpha}(\mathcal{S})\).
\end{enumerate}

\end{theorem}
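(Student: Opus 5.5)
Items (1)--(3) and (8) follow directly from the definitions and the invariance properties, so I would settle them first. For (1), if $S\in\mathcal S$ and $\tau(P)<d_{\mathcal S}(\varepsilon)$, then $\tau(P)<\delta$ for some admissible $\delta$ in the supremum defining $d_{\mathcal S}(\varepsilon)$, and so $\alpha(PS)\le\varepsilon$. For (2), if $T_1,T_2\in\mathcal D_\alpha(\mathcal S)$ and $t\in[0,1]$, then
$$\alpha(P(tT_1+(1-t)T_2))\le t\alpha(PT_1)+(1-t)\alpha(PT_2)\le\varepsilon.$$
For (3), the term $\alpha(PATB)$ is handled in two stages. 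On the right, $\alpha(XB)\le\|B\|\alpha(X)$, which holds because a contraction is a mean of two unitaries and $\alpha$ is unitarily invariant. On the left, I would pass to the polar decomposition $PA=V|PA|$, so that $\alpha(PAT)=\alpha(|PA|T)$. Here $|PA|=(A^*PA)^{1/2}$ has range projection $Q$ with $\tau(Q)\le\tau(P)$, since $Q$ is equivalent to the range projection of $PA$, which lies under $P$. Writing $|PA|=|PA|Q$ with $\||PA|\|\le1$ gives $\alpha(|PA|T)\le\alpha(QT)\le\varepsilon$. For (8), since $\gamma$ preserves $\tau$ and $\alpha$, we have $\alpha(P\gamma(T))=\alpha(\gamma(\gamma^{-1}(P)T))=\alpha(\gamma^{-1}(P)T)$ and $\tau(\gamma^{-1}(P))=\tau(P)$; here $\gamma$ is extended to $L^0$ by continuity in measure.

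The core of the theorem is (4), and (5) follows from it because both topologies are metrizable on $\mathcal D_\alpha(\mathcal S)$. The direction ``$\alpha$-convergence $\Rightarrow$ convergence in measure'' I take from Theorem \ref{alpha convergence implies in measure}, or derive directly from the normalization $\alpha(P)\ge\phi(\tau(P))$ together with the characterization of Lemma \ref{Convergence in measure Lemma}. For the converse, fix $\varepsilon>0$. By Lemma \ref{Convergence in measure Lemma}, there are projections $P_n$ with $\tau(1-P_n)\to0$ and $\|(S_n-S)P_n\|\to0$; passing to adjoints or using the left version, I choose $P_n$ so that $\|P_n(S_n-S)\|\to0$. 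I then split
$$S_n-S=P_n(S_n-S)+(1-P_n)S_n-(1-P_n)S.$$
The first term is bounded, and $\alpha(X)\le\|X\|$ by normalization, so its $\alpha$-norm tends to $0$. Once $\tau(1-P_n)<d_{\mathcal S}(\varepsilon)$, the other two terms each have $\alpha$-norm at most $\varepsilon$ because $S_n,S\in\mathcal D_\alpha(\mathcal S)$. This is exactly where the uniform domination is used.

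For (6), take $T\in\mathcal D_\alpha(\mathcal S)$ and truncate it: with the spectral projections $E_k=\chi_{[0,k]}(|T|)$, set $T_k=TE_k\in\mathcal M$, which lies in $\mathcal D_\alpha(\mathcal S)$ by (3). Since $T_k\to T$ in measure, the argument of (4) applied to pairs $T_k,T_j$ shows that $(T_k)$ is $\alpha$-Cauchy, because the splitting only needs the domination estimate. Its $\alpha$-limit coincides with the measure limit $T$, using continuity of the embedding $L^\alpha\hookrightarrow L^0$, which is the first direction of (4) extended to the completion. Hence $T\in L^\alpha$.

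For (7), let $T_n\in\mathcal D_\alpha(\mathcal S)$ with $T_n\to T$ in measure, and let $\tau(P)<d_{\mathcal S}(\varepsilon)$. Then $PT_n\to PT$ in measure, and the family $\{PT_n-PT_m\}$ is again dominated by the argument of (4), so $\alpha(PT_n-PT)\to0$ and $\alpha(PT)=\lim\alpha(PT_n)\le\varepsilon$. Closedness in the $\alpha$-topology then follows from (5). I expect the main obstacle to be the converse in (4), and specifically the use of diffuseness: I must make sure that the projections from the convergence-in-measure lemma can be placed on the correct side, and that the constant $\alpha(1-P_n)$ estimates do not require $\alpha$ to dominate anything beyond $\|\cdot\|$. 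Diffuseness enters in comparing $\alpha(P)$ with $\tau(P)$ in the direction ``$\alpha\Rightarrow$ measure'', where I would rely on the cited theorem.
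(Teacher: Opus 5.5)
Your proposal is correct. Items (1), (2), (5), (6) and (8) go essentially as in the paper; in (6) you truncate as $TE_k$ instead of $\chi_{[0,n)}(|T|)|T|$, which is immaterial. Items (3), (4) and (7) take a somewhat different route.

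\textbf{Item (3).} The paper writes $A$ (after rescaling) as an average of unitaries $U_k$ via Russo--Dye, uses $\alpha(PU_kT)=\alpha(U_k^*PU_kT)$ with $\tau(U_k^*PU_k)=\tau(P)$, and then lets $n\to\infty$. You instead pass from $P$ to the right support $Q$ of $PA$, which is equivalent to a subprojection of $P$, and use $|PA|=|PA|Q$ together with $\alpha(YX)\le\|Y\|\,\alpha(X)$. Your version needs only the equivalence of left and right supports and the ideal inequality, and it handles $\|A\|\le 1$ directly with no approximation. The paper's version relies only on invariance of $\tau$ under unitary conjugation, which is the same mechanism used in (8).

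\textbf{Items (4) and (7).} The paper first treats null sequences. It then shows that a dominated sequence converging in measure to an arbitrary $T\in\mathcal L^0(\mathcal M,\tau)$ is $\alpha$-Cauchy, identifies its $L^\alpha$-limit with $T$, and uses this stronger statement to obtain (7) from $\alpha$-closedness. Your three-term splitting $P_n(S_n-S)+(1-P_n)S_n-(1-P_n)S$ proves the stated claim in one step, because $S$ is assumed dominated. You then recover the stronger ``arbitrary limit'' version exactly where it is needed, in (6) and (7), by applying the splitting to pairs $T_k,T_j$ and $PT_n,PT_m$.

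\textbf{A minor remark.} Theorem \ref{alpha convergence implies in measure} uses only continuity of $\alpha$, not diffuseness. So your closing speculation about where diffuseness enters, and the undefined function $\phi$, are unnecessary. They are harmless, since you rely on the cited theorem.
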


Theorem 1.1 not only unify and extend the classical dominated convergence theorem to the noncommutative analysis, but also provides new techniques and new tools for noncommutative analysis and operator spaces, such as  constructing maximal functions or maximal  inequalities for noncommutative martingale theory.

As an application of our noncommutative DCT, we provide a complete equivalent characterization of $*$-automorphisms on finite von Neumann algebras (see Theorem 5.7).

\begin{theorem}
Suppose \(\mathcal{M}\) is a finite von Neumann algebra and \(\gamma\) is a \(*\)-automorphism of \(\mathcal{M}\). The following are equivalent:

\begin{enumerate}
    \item[(1)] \(\gamma(S) = S\) for every \(S \in \mathcal{Z}(\mathcal{M})\), i.e., \(\gamma\) fixes \(\mathcal{Z}(\mathcal{M})\).

    \item[(2)] \(\Phi \circ \gamma = \Phi\), where \(\Phi: \mathcal{M} \rightarrow \mathcal{Z}(\mathcal{M})\) is the center-valued trace on \(\mathcal{M}\).

    \item[(3)] For every \(A = A^{*} \in \mathcal{M}\), there is a sequence \(\{W_{n}\}\) of unitary operators in \(\mathcal{M}\) such that \(\|W_{n}^{*}AW_{n} - \gamma(A)\| \to 0\).

    \item[(4)] \(\alpha(\gamma(T)) = \alpha(T)\) for every unitarily invariant norm \(\alpha\) on \(\mathcal{M}\).

    \item[(5)] \(\tau \circ \gamma = \tau\) for every normal tracial state \(\tau\) on \(\mathcal{M}\).
\end{enumerate}
\end{theorem}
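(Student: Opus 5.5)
The plan is to prove the cycle of implications $(1)\Rightarrow(2)\Rightarrow(3)\Rightarrow(4)\Rightarrow(5)\Rightarrow(1)$, using standard structure theory of finite von Neumann algebras.

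For $(1)\Rightarrow(2)$, I will use that $\Phi\circ\gamma$ is again a center-valued trace once $\gamma$ fixes the center. It is positive, normal and unital, it satisfies $\Phi(\gamma(XY))=\Phi(\gamma(YX))$, and it is $\mathcal Z(\mathcal M)$-linear because $\gamma(Z T)=Z\gamma(T)$ for central $Z$. Uniqueness of the center-valued trace then gives $\Phi\circ\gamma=\Phi$.

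For $(2)\Rightarrow(3)$, let $A=A^*$. Since $\Phi(f(\gamma(A)))=\Phi(\gamma(f(A)))=\Phi(f(A))$ for every continuous $f$, the self-adjoint elements $A$ and $\gamma(A)$ have the same center-valued spectral distribution. They are therefore approximately unitarily equivalent in norm. I would cite the known result that, in a finite von Neumann algebra, $\Phi(E_A(t,\infty))=\Phi(E_B(t,\infty))$ for all $t$ implies $B\in\overline{\{W^*AW\}}^{\|\cdot\|}$. The argument approximates $A$ in norm by a step function $\sum t_i E_i$ of its spectral projections and notes that equal center-valued traces of projections mean equivalent projections. The one subtlety is that the spectral projections of $\gamma(A)$ are the images $\gamma(E_i)$, which have the same $\Phi$-values as the $E_i$. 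The equivalences of the pieces then assemble into a single unitary $W$ with $\|W^*AW-\gamma(A)\|$ small.

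For $(3)\Rightarrow(4)$, take $T\in\mathcal M$ with polar decomposition $T=V|T|$. Since $\gamma(T)=\gamma(V)\gamma(|T|)$ is the polar decomposition of $\gamma(T)$, we get $\alpha(\gamma(T))=\alpha(|\gamma(T)|)=\alpha(\gamma(|T|))$, using that unitarily invariant norms satisfy $\alpha(T)=\alpha(|T|)$ (via the Fack--Kosaki type results cited in the paper). By (3), $\gamma(|T|)$ is a norm limit of $W_n^*|T|W_n$. Since $\alpha\le\|\cdot\|$ for a normalized norm and $\alpha$ is unitarily invariant, $\alpha(\gamma(|T|))=\alpha(|T|)$.

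For $(4)\Rightarrow(5)$, take a normal tracial state $\tau$ and consider $\alpha_\tau(T)=\tau(|T|)$. This is a unitarily invariant norm when $\tau$ is faithful. If $\tau$ is not faithful, I will use the seminorm $\|T\|+\tau(|T|)$ or $\alpha(T)=\|T\|/2+\tau(|T|)$, which is a genuine unitarily invariant norm. Then (4) gives $\tau(\gamma(P))=\tau(P)$ for every projection $P$, after subtracting $\|\gamma(P)\|=\|P\|$. By linearity and norm density of spans of projections, $\tau\circ\gamma=\tau$.

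For $(5)\Rightarrow(1)$, it suffices to show $\gamma(Z)=Z$ for every central projection $Z$. The image $\gamma(Z)$ is also a central projection. Suppose $\gamma(Z)\neq Z$; replacing $\gamma$ by $\gamma^{-1}$ if necessary, which preserves the hypothesis, we may assume $\gamma(Z)(1-Z)\neq0$. This is a nonzero central projection $Q$. Choose a faithful normal tracial state $\tau_0$ and let $\tau(X)=\tau_0(XQ)/\tau_0(Q)$, a normal tracial state. Then $\tau(Z)=0$, while $\tau(\gamma(Z))=\tau_0(\gamma(Z)Q)/\tau_0(Q)=1$, contradicting (5).

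The main obstacle is $(2)\Rightarrow(3)$, the norm approximate unitary equivalence in the non-factor case. There the equivalence of spectral projections must be handled via the comparison theorem with central decomposition, carefully so that the partial isometries sum to a unitary.
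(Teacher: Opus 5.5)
Your proof is correct. For $(3)\Rightarrow(4)$, $(4)\Rightarrow(5)$ and $(5)\Rightarrow(1)$ it follows the paper's route, with two small differences:
\begin{enumerate}
\item In $(5)\Rightarrow(1)$ you spell out, via the state $X\mapsto\tau_0(XQ)/\tau_0(Q)$, the paper's one-line claim that normal tracial states separate points of the center.
\item In $(3)\Rightarrow(4)$ your estimate $\alpha(|T|)=\alpha(W_n^*|T|W_n)\to\alpha(\gamma(|T|))$ is the correct form of the paper's display. As written, that display asserts $\||T|-\gamma(|T|)\|=\lim_n\|U_n^*|T|U_n-\gamma(|T|)\|$, which is false in general.
\end{enumerate}

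The genuine differences are in the first two implications.

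For $(1)\Rightarrow(2)$, the paper works by hand with Dixmier averaging: it applies $\gamma$ and then $\Phi$ to a convex combination of unitary conjugates of $A$ that is close to $\Phi(A)$. You instead note that $\Phi\circ\gamma$ is bounded, tracial and fixes the center, and then invoke uniqueness of the center-valued trace. That uniqueness is proved by the same averaging, so your version is a cleaner packaging rather than a new idea.

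For $(2)\Rightarrow(3)$, the paper simply cites \cite{LHL} on approximate unitary equivalence of unital $*$-homomorphisms $C^*(A)\to\mathcal M$ with equal composition with $\Phi$. You give a self-contained argument that needs only the comparison theorem $E\sim F\iff\Phi(E)=\Phi(F)$ for projections in a finite algebra. The cited theorem is more general, since it covers homomorphisms of larger $C^*$-algebras, but that generality is not needed for a single self-adjoint $A$.

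The obstacle you anticipate is not actually there, and no central decomposition is needed. Take finitely many spectral projections $E_i$ of $A$ summing to $1$, and partial isometries with $V_i^*V_i=E_i$ and $V_iV_i^*=\gamma(E_i)$. Since $\sum_i\gamma(E_i)=\gamma(1)=1$, the sum $W=\sum_iV_i$ is automatically unitary. It satisfies $W(\sum_it_iE_i)W^*=\sum_it_i\gamma(E_i)$, and hence $\|WAW^*-\gamma(A)\|\le2\varepsilon$.

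Finally, $\alpha(T)=\alpha(|T|)$ needs no Fack--Kosaki input. In a finite algebra $T=V|T|$ with $V$ unitary, so unitary invariance gives it directly.
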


Among the five equivalent conditions in Theorem 1.3, the equivalence $(1)\Leftrightarrow (2)$ is a classical consequence of Kadison\cite{Kadison} on the center-valued trace. The equivalence $(4)\Leftrightarrow(5)$ is related to the work of Haagerup\cite{Haagerup1} on the standard form of von Neumann algebras, and the equivalence $(1)\Leftrightarrow(5)$ is well-known. Our contribution in this paper is to provide a new norm-theoretic criterion:
the class of center-preserving $*$-automorphisms (equivalently, $*$-automorphisms leaves trace invariant) coincides with those $*$-automorphisms preserving  every unitarily invariant norm on $\mathcal M$.

 The organization of the paper is structured as follows.  In Section 2, we give a  characterization of Nelson's convergence in measure, which yields to a new proof of  a result of Haagerup on a tracial embeddings  and the extension to a unital $*$-isomorphism \(\hat{\pi}\) on the algebra \(\mathcal{L}^{0}[0,1]\). This easily leads to a natural exposition of the Borel functional calculus for normal elements in \(\mathcal{L}^{0}(\mathcal{M}, \tau)\) in which convergence in measure is easily handled. Section 3  discusses unitarily invariant norms and tracial gauge norms, and establishes an equivalent characterization of equimeasurability for positive operators on a $II_1$ factor. In Section 4, we establish a fundamental relation between  convergence with respect to unitarily invariant norms and convergence in measure. This result is essential for embedding $L^\alpha(\mathcal M, \tau)$ into $L^0(\mathcal M, \tau).$ In Section 5, based on the notions of $\alpha$-equicontinuity and $\alpha$-domination, we  presents the noncommutative dominated convergence theorem for unitarily invariant norms, where both the notions of convergence and the dominating condition are greatly weakened. Moreover, we also characterize the $*$-automorphisms on any finite von Neumann algebra $\mathcal{M}$ that are isometries with respect to every unitarily invariant norm or, equivalently, preserve every tracial state, as those that fix the center.

This paper is almost self-contained and we do not assume any backgrounds on noncommutative $L^p$-theory.

\subsection*{Acknowledgments}

This paper is supported in part by the National Natural Science Foundation of China (Grant No. 12271321)
and by a grant from Simons Foundation. This work was initiated while Y. Chen was in the Department of Mathematics, University of New Hampshire during 2008 to 2015.

\section{Tracial embeddings, Convergence in Measure, and  Borel Functional Calculus}
\label{sec:3sets}

Throughout this section, we assume given a von Neumann algebra $\mathcal M$ on a Hilbert space $\mathcal H$, with a faithful normal tracial state $\tau.$

\subsection{Tracial embeddings }

 If \(\mathcal{C}\) is a maximal chain of projections in \(\mathcal{M}\), then we can write
\[
\mathcal{C}=\left\{P_{t}: 0 \leq t \leq 1\right\},
\]
with \(\tau(P_{t}) = t\) for every \(0 \leq t \leq 1\). Suppose \(\mu\) is Lebesgue measure on \([0, 1]\). It is
well-known that the map
\[
\chi_{[0, t)} \mapsto P_{t}
\]
 extends uniquely to a unital \(*\)-homomorphism \(\pi: {L}^{\infty}[0,1] \to {W}^{*}(\mathcal{C})\) such that, for every \(f \in {L}^{\infty}[0,1]\),
\[
\tau(\pi(f))=\int_{0}^{1} f \mathrm{d}\mu.
\]
We call such a map a $tracial\  embedding$ of \({L}^{\infty}[0,1]\) into \(\mathcal{M}\). If \(\mathcal{N}\) is a diffuse countably generated commutative von Neumann subalgebra of \(\mathcal{M}\), then there is a self-adjoint element \(A \in \mathcal{N}\) such that \(\mathcal{N}={W}^{*}(A)\). Then
\[
\mathcal{C}=\left\{\chi_{[0, s)}(A): 0 \leq s \leq 1\right\}
\]
is a maximal chain of projections in \(\mathcal{N}\) and \({W}^{*}(A)={W}^{*}(\mathcal{C})=\mathcal{N}\). Thus there is a tracial embedding of \({L}^{\infty}[0,1]\) onto \(\mathcal{N}\). If \(\mathcal{M}\) is separable (i.e., acts on a separable Hilbert space), then every diffuse commutative von Neumann subalgebra of \(\mathcal{M}\) is countably generated and is therefore the image of a tracial embedding of \({L}^{\infty}[0,1]\).

\subsection{Convergence in measure}

In \cite{Nelson}, E. Nelson defined the topology of convergence \ in \ measure on \(\mathcal{M}\), which is a central concept used in our work. Roughly speaking, it is appropriate  to define noncommutative ``measurable operators" in $\mathcal M$.

\begin{definition}
For $\varepsilon>0,$ let
$$N(\varepsilon)=\{T\in \mathcal M:  \text{for some projection  } P\in \mathcal M, \ \tau(P)\leq \varepsilon,\  \text{and} \ \|T(1-P)\|\leq \varepsilon \}$$
be a fundamental system of neighborhoods of $0$ in $\mathcal M.$  It is called the measure topology of $\mathcal M.$

\end{definition}

\begin{remark}
Since there is a countable neighborhood base at the origin, the topology on \(\mathcal{M}\) is metrizable. We define the notion of convergence in measure ($\tau$-convergence)  as follows:
 $$T_{\lambda} \to T \  \mbox{in measure} \ (i.e.,
T_{\lambda} \stackrel{\tau}{\to} T)\Longleftrightarrow \forall
 \varepsilon>0,,  \exists  \lambda_0 \ s.t.\  \forall \lambda\geq \lambda_0,\  T_\lambda\in N(\varepsilon).$$

\end{remark}

Let \(\mathcal{L}^{0}(\mathcal{M}, \tau)\) denote the completion of \(\mathcal{M}\) with respect to convergence in measure. If \(S \subset \mathcal{L}^{0}(\mathcal{M}, \tau)\), we denote the closure of \(S\) in measure by \(S^{-\tau}\).

\begin{proposition} Suppose  $T\in \mathcal M.$ Then
\begin{enumerate}\item $|T|\in  N({\varepsilon})\Longleftrightarrow T\in N(\varepsilon)\Longleftrightarrow T^*\in N(\varepsilon ) ;$
\item \(|\tau(ATB)| \leq \|A\|\|T\|_{1}\|B\|\) for all $A, B\in \mathcal M.$
 \end{enumerate}

\end{proposition}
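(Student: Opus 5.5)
For part (1), I will use the polar decomposition $T=V|T|$ with $V$ a partial isometry in $\mathcal M$. The key point is that $\|T(1-P)\|=\|\,|T|(1-P)\|$ for every projection $P$. This holds because
\[
\|T(1-P)\|^2=\|(1-P)T^*T(1-P)\|=\|(1-P)|T|^2(1-P)\|=\|\,|T|(1-P)\|^2 .
\]
Consequently the same projection $P$ witnesses $T\in N(\varepsilon)$ exactly when it witnesses $|T|\in N(\varepsilon)$. This gives $|T|\in N(\varepsilon)\Leftrightarrow T\in N(\varepsilon)$.

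The equivalence $T\in N(\varepsilon)\Leftrightarrow T^*\in N(\varepsilon)$ is the main obstacle, because $T^*(1-P)$ involves $|T^*|$ rather than $|T|$. My plan is to move the projection across the polar decomposition.

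1. Suppose $\tau(P)\le\varepsilon$ and $\|T(1-P)\|\le\varepsilon$. By the first step, $\|\,|T|(1-P)\|\le\varepsilon$, and taking adjoints gives $\|(1-P)|T|\|\le\varepsilon$.
2. Set $Q=VPV^*$. This is a projection, a subprojection of $VV^*$, and $\tau(Q)=\tau(PV^*V)\le\tau(P)\le\varepsilon$ by the trace property.
3. I then claim $\|(1-Q)T\|\le\varepsilon$. Write $(1-Q)T=(1-Q)V|T|$. Since $V=VV^*V$, we have $(1-Q)V=V-VPV^*V$. This is not obviously equal to $V(1-P)$ unless $P$ commutes with $V^*V$.
4. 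To avoid that issue, I will instead first replace $P$ by a spectral projection of $|T|$, which commutes with $V^*V$. Concretely, take $P'=\chi_{(\varepsilon,\infty)}(|T|)$. By the standard estimate (to be checked), $\tau(P')\le\tau(P)$, because on the range of $P'$ the form $|T|^2$ exceeds $\varepsilon^2$, so that range meets $\operatorname{ran}(1-P)$ trivially. Also $\|\,|T|(1-P')\|\le\varepsilon$.
5. With $P'\le V^*V$ commuting with $|T|$, we get $(1-VP'V^*)V=V(1-P')$. Hence $\|(1-Q')T\|=\|V(1-P')|T|\|\le\varepsilon$ with $Q'=VP'V^*$, and so $\|T^*(1-Q')\|\le\varepsilon$ and $\tau(Q')\le\varepsilon$.
6. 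The converse direction follows by symmetry, applying the argument to $T^*$.

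For part (2), I will reduce to the case $T\ge 0$ via $T=V|T|$, so that $\tau(ATB)=\tau(BAV|T|)$ by traciality. Splitting $|T|=|T|^{1/2}|T|^{1/2}$ and applying Cauchy--Schwarz for $\tau$ gives
\[
|\tau(C|T|)|\le \tau(|T|^{1/2}C^*C|T|^{1/2})^{1/2}\,\tau(|T|)^{1/2}\le\|C\|\,\|T\|_1 ,
\]
where $C=BAV$ and $\|T\|_1=\tau(|T|)$. Since $\|C\|\le\|A\|\|B\|$, this yields $|\tau(ATB)|\le\|A\|\|T\|_1\|B\|$.
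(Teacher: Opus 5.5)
Your proof is correct. The first equivalence $|T|\in N(\varepsilon)\Leftrightarrow T\in N(\varepsilon)$ is exactly the paper's argument; the difference lies in how you pass from $T$ to $T^*$.

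\textbf{Passing from $T$ to $T^*$.} The paper uses the fact that in a finite von Neumann algebra the polar decomposition can be written $T=U|T|$ with $U$ \emph{unitary}. Then $|T^*|=U|T|U^*$, and for any witness $P$ for $T$ one has $T^*(1-UPU^*)=|T|(1-P)U^*$ and $\tau(UPU^*)=\tau(P)$. So the witness is simply conjugated, and the commutation problem you hit in step 3 never arises; it is an artifact of keeping $V$ a partial isometry.

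Your workaround of replacing $P$ by $P'=\chi_{(\varepsilon,\infty)}(|T|)\le V^*V$ is valid. However, the step you left ``to be checked'' should be made explicit. From $P'\wedge(1-P)=0$ you get $\tau(P')\le\tau(P)$ by Kaplansky's formula: $P'=P'-P'\wedge(1-P)\sim P'\vee(1-P)-(1-P)\le P$.

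Both routes use a piece of comparison theory. The paper's goes through $1-V^*V\sim 1-VV^*$, which is what makes a unitary polar part exist; yours goes through the parallelogram law. The paper's route is shorter. What yours buys is a sharper fact: $\chi_{(\varepsilon,\infty)}(|T|)$ is a minimal-trace witness. Hence $T\in N(\varepsilon)$ if and only if $\tau(\chi_{(\varepsilon,\infty)}(|T|))\le\varepsilon$, and $VP'V^*=\chi_{(\varepsilon,\infty)}(|T^*|)$ is the corresponding witness for $T^*$.

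\textbf{Part (2).} The paper just invokes $|\tau(X)|\le\|X\|_1$ and the bimodule inequality $\|ATB\|_1\le\|A\|\,\|T\|_1\,\|B\|$. Your argument via traciality and the Cauchy--Schwarz inequality for $\tau$ proves the estimate directly and is self-contained. It is essentially how those $L^1$ facts are proved in the first place, so it is fine.
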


\begin{proof}

(1) Since \(\mathcal{M}\) is a finite von Neumann algebra, every \(T\) in \(\mathcal{M}\) has a polar form \(T = U(T^{*}T)^{1/2}\) with \(U\) unitary. We define \(|T| = (T^{*}T)^{1/2}\), so the polar decomposition of \(T\) is \(T = U|T|\). Note that \[\|T(1 - P)\| = \||T|(1 - P)\|\] for every projection \(P\). Hence \(T \in N(\varepsilon)\) if and only if \(|T| \in  N(\varepsilon)\).

On the other hand, since \[|T^{*}| = (TT^{*})^{1/2} = U|T|U^{*},\] it follows that \(T \in  N(\varepsilon)\) if and only if \(T^{*} \in N(\varepsilon)\).

(2) Recall that for \(T \in \mathcal{M}\), \(\|T\|_{1} = \tau(|T|)\) is a norm on \(\mathcal{M}\), and for every \(A, B \in \mathcal{M}\), we have \(|\tau(ATB)| \leq \|ATB\|_{1} \leq \|A\|\|T\|_{1}\|B\|\).

\end{proof}

It is well-known that a sequence \(\{f_{n}\}\) of measurable functions on \([0, 1]\) converges in measure to 0 if and only if
\[
\lim_{n \to \infty} \int_{[0,1]} \frac{|f_{n}|}{1 + |f_{n}|} \, d\mu = 0,
\]
and that the space \(\mathcal{L}^{0}[0, 1]\) of all measurable functions is a metric space with the metric
\[
d(f, g) = \int_{[0,1]} \frac{|f - g|}{1 + |f - g|} \, d\mu.
\]

The analogue in \((\mathcal{M}, \tau)\) is
\[
d(S, T) = \tau\left(|S - T|[1 + |S - T|]^{-1}\right),
\]
although this might not be a metric when \(\mathcal{M}\) is not commutative. However, the analogy still works.

\begin{lemma}\label{Convergence in measure Lemma}
Suppose \(\{T_{\lambda}\}\) is a net in \(\mathcal{M}\). The following are equivalent:
\begin{enumerate}
\item \(T_{\lambda} \stackrel{\tau}{\to} 0\).
\item For every \(\varepsilon > 0\), \(\tau\left(\chi_{[\varepsilon, \infty)}(|T_{\lambda}|)\right) \to 0\).
\item There is a net \(\{P_{\lambda}\}\) of projections in $\mathcal M $ such that
\[
\tau(P_{\lambda}) \to 0 \quad \text{and} \quad \left\||T_{\lambda}|(1 - P_{\lambda})\right\| \to 0.
\]
\item \(\lim_{\lambda} \tau\left(|T_{\lambda}|[1 + |T_{\lambda}|]^{-1}\right) = 0\).
\end{enumerate}
\end{lemma}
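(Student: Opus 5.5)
The plan is to prove the cycle $(1)\Rightarrow(2)\Rightarrow(3)\Rightarrow(1)$ and then show separately that $(2)\Leftrightarrow(4)$. Throughout I will use part (1) of the preceding Proposition, which lets me replace $T_\lambda$ by $|T_\lambda|$. I will also use the spectral projections $E_\lambda(\varepsilon)=\chi_{[\varepsilon,\infty)}(|T_\lambda|)$.

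\emph{Step $(1)\Rightarrow(2)$.} Fix $\varepsilon>0$ and $0<\eta<\varepsilon$. For large $\lambda$ there is a projection $P$ with $\tau(P)\le\eta$ and $\||T_\lambda|(1-P)\|\le\eta$. Let $Q=E_\lambda(\varepsilon)$. I claim $Q\wedge(1-P)=0$: a unit vector $x$ in the range of both would satisfy $\||T_\lambda|x\|\ge\varepsilon$ and $\||T_\lambda|x\|\le\eta<\varepsilon$. By the parallelogram law (Kaplansky formula) in the finite algebra $\mathcal M$,
\[
\tau(Q)=\tau\bigl(Q-Q\wedge(1-P)\bigr)=\tau\bigl(Q\vee(1-P)-(1-P)\bigr)\le\tau(P)\le\eta .
\]
Since $\eta$ is arbitrary, $\tau(E_\lambda(\varepsilon))\to0$. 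This is the one genuinely noncommutative point, because $P$ need not commute with $|T_\lambda|$, and I expect it to be the main obstacle; the comparison of projections handles it.

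\emph{Step $(2)\Rightarrow(3)$.} Using (2), choose indices $\lambda_k$ increasing in $k$ such that $\tau(E_\lambda(1/k))<1/k$ for all $\lambda\ge\lambda_k$. For $\lambda\ge\lambda_1$, let $k(\lambda)$ be the largest $k$ with $\lambda\ge\lambda_k$, capped at some $n$ when $\lambda\ge\lambda_j$ for all $j$, and do this with a diagonal choice so that $k(\lambda)\to\infty$ along the net. Set $P_\lambda=E_\lambda(1/k(\lambda))$. Then $\tau(P_\lambda)<1/k(\lambda)$ and $\||T_\lambda|(1-P_\lambda)\|\le 1/k(\lambda)$. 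To avoid the bookkeeping for general nets, I could instead prove (3) in the equivalent form ``for every $\varepsilon$ there is eventually a $P$ with $\tau(P)\le\varepsilon$ and $\||T_\lambda|(1-P)\|\le\varepsilon$'', taking $P=E_\lambda(\varepsilon)$.

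\emph{Step $(3)\Rightarrow(1)$.} Given $\varepsilon>0$, eventually $\tau(P_\lambda)\le\varepsilon$ and $\||T_\lambda|(1-P_\lambda)\|\le\varepsilon$. Hence $|T_\lambda|\in N(\varepsilon)$, and by the Proposition $T_\lambda\in N(\varepsilon)$.

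\emph{Step $(2)\Leftrightarrow(4)$.} This uses only the spectral theorem for the single positive operator $|T_\lambda|$, with $f(t)=t/(1+t)$, which is increasing, lies in $[0,1)$, and satisfies $f(t)\le t$. In one direction, splitting $1=E_\lambda(\varepsilon)+(1-E_\lambda(\varepsilon))$ gives
\[
\tau\bigl(f(|T_\lambda|)\bigr)\le\tau(E_\lambda(\varepsilon))+\frac{\varepsilon}{1+\varepsilon}.
\]
In the other direction, $f(|T_\lambda|)\ge f(\varepsilon)E_\lambda(\varepsilon)$, so
\[
\tau(E_\lambda(\varepsilon))\le\frac{1+\varepsilon}{\varepsilon}\,\tau\bigl(f(|T_\lambda|)\bigr).
\]
Both implications follow immediately from these two inequalities.
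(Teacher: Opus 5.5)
Your proof is correct. It differs from the paper's mainly in where and how the one genuinely noncommutative estimate is made.

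\textbf{The key step.} The paper runs the cycle $(2)\Rightarrow(1)\Rightarrow(3)\Rightarrow(4)\Rightarrow(2)$. For the key step it compresses the operator inequality $\varepsilon^{2}\chi_{[\varepsilon,\infty)}(|T|)\le\chi_{[\varepsilon,\infty)}(|T|)|T|^{2}$ by $1-P$ to get $\|\chi_{[\varepsilon,\infty)}(|T|)(1-P)\|\le\delta/\varepsilon$. It then bounds $\tau(\chi_{[\varepsilon,\infty)}(|T|))\le\delta/\varepsilon+\tau(P)$ using only $|\tau(X)|\le\|X\|$ and $|\tau(XP)|\le\|X\|\,\tau(P)$. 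You instead note that $\chi_{[\varepsilon,\infty)}(|T_\lambda|)\wedge(1-P)=0$ once $\eta<\varepsilon$, and apply the Kaplansky parallelogram law. This gives the cleaner bound $\tau(\chi_{[\varepsilon,\infty)}(|T_\lambda|))\le\tau(P)$ with no $\delta/\varepsilon$ loss. The price is invoking comparison of projections rather than elementary norm--trace inequalities. The two arguments are related: $\|Q(1-P)\|<1$ already forces $Q\wedge(1-P)=0$.

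\textbf{Reaching (4).} The paper gets (4) from (3), splitting the trace along projections $P_\lambda$ that need not commute with $|T_\lambda|$. You get (4) from (2), where everything sits in the abelian algebra $W^{*}(|T_\lambda|)$ and the spectral theorem suffices.

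\textbf{From (2) to (3).} You make explicit the diagonal passage from (2) to a single net $\{P_\lambda\}$. The paper's ``$(1)\Rightarrow(3)$'' leaves this implicit; its computation really only yields the trace bound in (2). One small repair is needed in your version. Instead of ``capping'' $k(\lambda)$, observe that if $\lambda\ge\lambda_k$ for every $k$, then by normality $\tau(\chi_{(0,\infty)}(|T_\lambda|))=\lim_k\tau(E_\lambda(1/k))=0$. Faithfulness then gives $T_\lambda=0$, so you can take $P_\lambda=0$. A fixed cap would break $k(\lambda)\to\infty$ if such indices are cofinal. Your ``equivalent form'' fallback is just (1) restated for $|T_\lambda|$, so it does not by itself deliver (3) as stated.
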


\textbf{Proof}.
\((2) \Rightarrow (1)\). Assume (2) holds and \(\varepsilon > 0\). We have \(\tau\left(\chi_{[\varepsilon, \infty)}(|T_{\lambda}|)\right) \to 0\). Thus there is a $\lambda_0$ such that $\lambda\geq \lambda_0$ implies
$\tau(\chi_{[\varepsilon, \infty)}(|T_{\lambda}|)<\varepsilon.$ Also, for every $\lambda \geq \lambda_{0}$,
\[
\left\||T_{\lambda}|(1 - \chi_{[\varepsilon, \infty)}(|T_{\lambda}|))\right\| \leq \varepsilon,
\]
which implies \(T_{\lambda} \in N(\varepsilon)\). Hence \(T_{\lambda} \stackrel{\tau}{\to} 0\).

\((1) \Rightarrow (3)\). Suppose \(0 < \varepsilon, \delta\) and \(T \in N(\delta)\). Then \(|T| \in  N(\delta)\). Thus there is a projection \({P} \in \mathcal{M}\) such that \(\tau(\mathcal{P}) \leq \delta\) and \(\||T|(1 - {P})\| \leq \delta\). Since
 \[0 \leq \varepsilon^{2}\left[\chi_{[\varepsilon, \infty)}(|T|)\right]^{2} \leq \chi_{[\varepsilon, \infty)}(|T|)|T|^{2},\]we have
\[
0 \leq \varepsilon^{2}(1 -{P})\left[\chi_{[\varepsilon, \infty)}(|T|)\right]^{2}(1 - {P}) \leq (1 - {P})\left(\chi_{[\varepsilon, \infty)}(|T|)|T|^{2}\right)(1 - {P}).
\]
Taking norms, we have
\[
\varepsilon^{2}\left\| \left[\chi_{[\varepsilon, \infty)}(|T|)\right](1 - {P})\right\|^{2} \leq \left\| \left(\chi_{[\varepsilon, \infty)}(|T|)|T|\right)(1 - {P})\right\|^{2} \leq \delta^{2}.
\]
Thus,
\[
\left\| \chi_{[\varepsilon, \infty)}(|T|) - \chi_{[\varepsilon, \infty)}(|T|){P} \right\| = \left\| \left[\chi_{[\varepsilon, \infty)}(|T|)\right](1 - {P}) \right\| \leq \delta / \varepsilon.
\]
Taking adjoints, we have
\begin{align*}
\tau\left(\chi_{[\varepsilon, \infty)}(|T|)\right) &\leq \tau\left(\chi_{[\varepsilon, \infty)}(|T|) - \chi_{[\varepsilon, \infty)}(|T|){P}\right) + \tau\left(\chi_{[\varepsilon, \infty)}(|T|){P}\right) \\
&\leq \left\| \chi_{[\varepsilon, \infty)}(|T|) - \chi_{[\varepsilon, \infty)}(|T|){P} \right\| + \left\| \chi_{[\varepsilon, \infty)}(|T|){P} \right\|\\
& \leq \delta / \varepsilon + \left\| \chi_{[\varepsilon, \infty)} \right\| \| {P} \|_{1} \\
&\leq \delta / \varepsilon + \tau({P}) < \delta + \delta = 2\delta.
\end{align*}
Hence, if (1) holds, then (3) must be true.

\((3) \Rightarrow (4)\). Suppose (3) holds. Then we have
\begin{align*}
\tau\left(\left[1 + \left|T_{\lambda}\right|\right]^{-1}\left|T_{\lambda}\right|\right)
&= \left|\tau\left(\left(\left[1 + \left|T_{\lambda}\right|\right]^{-1}\right)\left|T_{\lambda}\right|\mathcal{P}_{\lambda}\right| + \left|\tau\left((\left[1 + \left|T_{\lambda}\right|\right]^{-1}\right)\left|T_{\lambda}\right|\left(1 - \mathcal{P}_{\lambda}\right)\right)\right| \\
&\leq \left\| \left(\left[1 + \left|T_{\lambda}\right|\right]^{-1}\right)\left|T_{\lambda}\right|\right\| \|\mathcal{P}_{\lambda}\|_{1} + \left\| \left(\left[1 + \left|T_{\lambda}\right|\right]^{-1}\right)\right\|_{1} \left\| \left|T_{\lambda}\right|\left(1 - \mathcal{P}_{\lambda}\right)\right\| \\
&\leq \tau\left(\mathcal{P}_{\lambda}\right) + \left\| \left|T_{\lambda}\right|\left(1 - \mathcal{P}_{\lambda}\right)\right\| \to 0.
\end{align*}
Since $\tau$ is tracial, it follows that \(\lim_{\lambda} \tau\left(|T_{\lambda}|[1 + |T_{\lambda}|]^{-1}\right) = 0\).

\((4) \Rightarrow (2)\). Suppose \(\tau\left(\left|T_{\lambda}\right|\left[1 + \left|T_{\lambda}\right|\right]^{-1}\right) \to 0\). If \(t \geq \varepsilon > 0\), we have \[\frac{t}{1 + t}=1 - \frac{1}{1 + t} \geq \frac{\varepsilon}{1 + \varepsilon}.\] It follows that
\[
\left|T_{\lambda}\right|\left[1 + \left|T_{\lambda}\right|\right]^{-1} \geq \left|T_{\lambda}\right|\left[1 + \left|T_{\lambda}\right|\right]^{-1} \chi_{[\varepsilon, \infty)}\left(\left|T_{\lambda}\right|\right) \geq \frac{\varepsilon}{1 + \varepsilon} \chi_{[\varepsilon, \infty)}\left(\left|T_{\lambda}\right|\right).
\]
Thus
\[
\tau\left(\chi_{[\varepsilon, \infty)}\left(\left|T_{\lambda}\right|\right)\right) \leq \frac{1 + \varepsilon}{\varepsilon} \tau\left(\left|T_{\lambda}\right|\left[1 + \left|T_{\lambda}\right|\right]^{-1}\right) \to 0.
\]
Hence the statement (2) holds.

\subsection{Borel functional calculus}
E. Nelson\cite{Nelson} and U. Haagerup prove that \(\mathcal{L}^{0}(\mathcal{M}, \tau)\) is a \(\ast\)-algebra with the operations of multiplication, scalar multiplication, addition, and adjoint are all continuous. Furthermore, the elements of \(\mathcal{L}^{0}(\mathcal{M}, \tau)\) can be viewed as closed densely defined linear operators on \({L}^{2}(\mathcal{M}, \tau)\) such that if \(T \in \mathcal{L}^{0}(\mathcal{M}, \tau)\) is normal, then all spectral projections of \(T\) are in \(\mathcal{M}\). Moreover, every \(T \in \mathcal{L}^{0}(\mathcal{M}, \tau)\) has a polar decomposition
\[
T = U\left(T^{*}T\right)^{1/2},
\]
where \(U\) is a unitary operator in \(\mathcal{M}\). If \(0 \leq T \in \mathcal{L}^{0}(\mathcal{M}, \tau)\), then the chain of spectral projections of \(T\) corresponding to the intervals \([0, s)\) for \(s \in [0, \infty)\) is contained in a maximal chain \(\mathcal{C}\) of projections in \(\mathcal{M}\), and \(T \in {W}^{*}({\mathcal C})^{-\tau}\).

The following result, with a proof different from ours, was obtained by U. Haagerup\cite{Terp}. Recall that if \(X\) and \(Y\) are metric spaces and \(f: X \to Y\) is a bijection such that a sequence \(\{x_n\}\) is Cauchy in \(X\) if and only if \(\{f(x_{n})\}\) is Cauchy in \(Y\), then \(f\) extends uniquely to a homeomorphism \(\hat{f}\) from the completions of \(X\) and \(Y\) so that \(\hat{f}\) and \(\hat{f}^{-1}\) send Cauchy sequences to Cauchy sequences. The following result is, therefore, an immediate consequence of Lemma \ref{Convergence in measure Lemma}.

\begin{theorem}\label{Borel functional calculus}
Suppose \(\pi: {L}^{\infty}[0,1] \to \mathcal{M}\) is a unital \(*\)-homomorphism such that, for every \(f \in {L}^{\infty}[0,1]\),
\[
\tau(\pi(f)) = \int_{[0,1]} f \, \mathrm{d}\mu.
\]
Then \(\pi\) extends to a unital $*$-isomorphism \(\hat{\pi}: \mathcal{L}^{0}[0,1] \to \pi({L}^{\infty}[0,1])^{-\tau}\) that is a homeomorphism with respect to convergence in measure, such that, if \(f \geq 0\) is in \(\mathcal{L}^{0}[0,1]\), then
\[
\tau(\hat{\pi}(f)) = \int_{[0,1]} f \, \mathrm{d}\mu,
\]
where $\mathcal{L}^{0}[0,1]$ denotes all measurable functions on $[0,1].$
\end{theorem}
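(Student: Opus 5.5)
The plan is to apply the completion principle recalled just before the statement: a bijection between metric spaces that sends Cauchy sequences to Cauchy sequences in both directions extends uniquely to a homeomorphism of the completions. Take $X=L^{\infty}[0,1]$ with the metric $d(f,g)=\int\frac{|f-g|}{1+|f-g|}\,d\mu$, whose completion is $\mathcal{L}^{0}[0,1]$. Take $Y=\pi(L^{\infty}[0,1])$ with the topology of convergence in measure, which is metrizable with a translation-invariant uniformity; its completion inside $\mathcal{L}^{0}(\mathcal{M},\tau)$ is $\pi(L^{\infty}[0,1])^{-\tau}$.

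\textbf{Step 1: $\pi$ is injective.} If $\pi(f)=0$, then $\tau(\pi(|f|^{2}))=\int|f|^{2}\,d\mu=0$, so $f=0$ a.e. Hence $\pi$ is a bijection of $X$ onto $Y$.

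\textbf{Step 2: the key identity.} Since $\pi$ is a unital $*$-homomorphism between commutative, respectively unital, C$^*$-algebras, it commutes with continuous functional calculus. Thus $\pi(|h|)=|\pi(h)|$ and $\pi\bigl(|h|(1+|h|)^{-1}\bigr)=|\pi(h)|\,(1+|\pi(h)|)^{-1}$. Applying $\tau$ gives
\[
\tau\bigl(|\pi(h)|[1+|\pi(h)|]^{-1}\bigr)=\int\frac{|h|}{1+|h|}\,d\mu .
\]
Put $h=f_n-f_m$. By Lemma \ref{Convergence in measure Lemma}, (1)$\Leftrightarrow$(4), the sequence $\{\pi(f_n)\}$ is Cauchy in measure exactly when $\{f_n\}$ is Cauchy for $d$. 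Here I am using that being Cauchy in a topological group means the differences tend to $0$, applied to the double-indexed net $T_{(n,m)}=\pi(f_n)-\pi(f_m)$. Therefore $\pi$ extends uniquely to a homeomorphism $\hat\pi$ from $\mathcal{L}^{0}[0,1]$ onto $\pi(L^{\infty}[0,1])^{-\tau}$.

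\textbf{Step 3: algebraic properties.} Sums, products, scalar multiplication and the adjoint are continuous in measure on $\mathcal{L}^{0}(\mathcal{M},\tau)$, by the Nelson--Haagerup facts recalled above, and also on $\mathcal{L}^{0}[0,1]$. Since $L^\infty$ is dense, the identities $\hat\pi(fg)=\hat\pi(f)\hat\pi(g)$, $\hat\pi(f+g)=\hat\pi(f)+\hat\pi(g)$ and $\hat\pi(\bar f)=\hat\pi(f)^*$ pass to the limit by approximating $f$ and $g$ with truncations. So $\hat\pi$ is a unital $*$-homomorphism. Being a bijection onto its range, it is a $*$-isomorphism onto $\pi(L^{\infty}[0,1])^{-\tau}$, and that closure is therefore a $*$-algebra.

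\textbf{Step 4: the trace formula, the main obstacle.} For $f\ge 0$, I would set $f_n=\min(f,n)$. Then $0\le f_n\uparrow f$, $f_n\to f$ in measure, and $\hat\pi(f_n)=\pi(f_n)\to\hat\pi(f)$ in measure. The difficulty is to make sense of $\tau(\hat\pi(f))$ for an unbounded positive $\hat\pi(f)$ and to justify passing to the limit. I would define $\tau$ on positive measurable operators as the supremum of $\tau$ over the bounded spectral truncations, or equivalently via $\tau(T)=\sup_n\tau(T\chi_{[0,n]}(T))$.

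To use this, I would verify that $\hat\pi(f)\chi_{[0,n]}(\hat\pi(f))=\pi(f\chi_{\{f\le n\}})$. This follows because $\hat\pi$ respects Borel functional calculus: it does so for bounded Borel functions of $f$, by continuity of $\hat\pi$ in measure together with approximation of bounded Borel functions by polynomials. The truncations $\pi(f\chi_{\{f\le n\}})$ increase to $\hat\pi(f)$. Monotone convergence on $[0,1]$ then gives
\[
\tau(\hat\pi(f))=\sup_n\int f\chi_{\{f\le n\}}\,d\mu=\int f\,d\mu ,
\]
with both sides possibly equal to $+\infty$.
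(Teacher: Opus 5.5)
Your proposal is correct and follows the paper's own argument. The paper obtains the theorem as an immediate consequence of Lemma~\ref{Convergence in measure Lemma} and the completion principle for bijections that preserve Cauchy sequences in both directions, which is exactly what your Steps 1--2 carry out via the identity $\tau\bigl(|\pi(h)|[1+|\pi(h)|]^{-1}\bigr)=\int\frac{|h|}{1+|h|}\,d\mu$.

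Your Steps 3--4 supply details the paper leaves implicit. In Step 4, however, polynomial approximation only shows that $p_k(\hat\pi(f))$ converges to $\hat\pi(\varphi\circ f)$; it does not by itself identify that limit with the spectral projection, so the identity $\chi_{[0,n]}(\hat\pi(f))=\pi(\chi_{\{f\le n\}})$ needs a cleaner justification. Either take the paper's subsequent definition $\varphi(T)=\hat\pi(\varphi\circ f)$, or note that $\pi$ is automatically normal (since $\tau$ is faithful and normal) and apply it to the bounded transform $\hat\pi(f)(1+\hat\pi(f))^{-1}=\pi\bigl(f/(1+f)\bigr)$.
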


The above theorem gives rise to a functional calculus for normal elements of \(\mathcal{L}^{0}(\mathcal{M}, \tau)\).

Suppose \(T \in \mathcal{L}^{0}(\mathcal{M}, \tau)\) and \(T \in {W}^{*}(\mathcal{C})^{-\tau}\) for some maximal chain \(\mathcal{C}\) of projections in \(\mathcal{M}\). Then there is a tracial embedding \(\pi: {L}^{\infty}[0,1] \to {W}^{*}(\mathcal{C})\) that uniquely extends to an isomorphism \(\hat{\pi}: \mathcal{L}^{0}([0,1]) \to {W}^{*}(\mathcal{C})^{-\tau}\). Suppose \(f \in \mathcal{L}^{0}([0,1])\) with \(\hat{\pi}(f) = T\), and \(\varphi: \mathbb{C} \to \mathbb{C}\) is a Borel measurable function. We see that \(\varphi \circ f \in \mathcal{L}^{0}([0,1])\). We want to define \[\varphi(T) = \hat{\pi}(\varphi \circ f).\] To do this we need to show that the definition is independent of the choice of tracial embedding \(\pi\).

\begin{lemma}
Suppose \(\pi, \rho: {L}^{\infty}[0,1] \to \mathcal{M}\) are tracial embeddings, \(f, g \in \mathcal{L}^{0}([0,1])\), and \(\hat{\pi}(f) = \hat{\rho}(g) = T\). Then for every measurable \(\varphi: \mathbb{C} \to \mathbb{C}\),
\[
\hat{\pi}(\varphi \circ f) = \hat{\rho}(\varphi \circ g).
\]
\end{lemma}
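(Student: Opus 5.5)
We will show that both $\hat{\pi}(\varphi\circ f)$ and $\hat{\rho}(\varphi\circ g)$ are determined by the spectral measure of $T$ alone. The main tool is the spectral projections of $T$. For a Borel set $E\subset\mathbb{C}$ we first show
\[
\hat{\pi}(\chi_E\circ f)=\hat{\pi}(\chi_{f^{-1}(E)})=\pi(\chi_{f^{-1}(E)}) = \chi_E(T),
\]
where $\chi_E(T)$ is the spectral projection of the (normal, closed, densely defined) operator $T$ on $L^2(\mathcal M,\tau)$. Once this is known, the same holds for $\rho$ and $g$, so $\pi(\chi_{f^{-1}(E)})=\rho(\chi_{g^{-1}(E)})$ for every Borel $E$.

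To prove the displayed identity, we would use that $\hat{\pi}$ is a $*$-isomorphism that is a homeomorphism in measure (Theorem \ref{Borel functional calculus}). First take $f$ bounded, so $f\in L^\infty[0,1]$ and $\pi$ restricted to $L^\infty$ is a normal $*$-homomorphism. Normality follows because it is trace preserving and $\tau$ is faithful normal: increasing nets go to increasing nets, and traces match. Then $\pi(\chi_E\circ f)=\chi_E(\pi(f))$ by the usual Borel functional calculus. This holds first for polynomials in $f,\bar f$, then by normality and a monotone class argument for all bounded Borel functions.

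For unbounded $f$, set $f_k=f\chi_{\{|f|\le k\}}$. Then $f_k\to f$ in measure, so $\hat\pi(f_k)\to T$ in measure. Also $\hat\pi(f_k)=T Q_k$, where $Q_k=\pi(\chi_{\{|f|\le k\}})$ is a projection commuting with $T$ and increasing to $1$. The spectral projections of $TQ_k$ then agree with those of $T$ on the range of $Q_k$, which passes the identity from the bounded case to $T$ itself. \textbf{This identification of $\hat\pi(f)$ with the closed operator $T$ and of its spectral projections is the step I expect to be the main obstacle.} My first attempt would be to check directly that $\hat\pi(f)$ restricted to $Q_k L^2$ equals $\pi(f_k)$ and then use closedness.

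Given the projection identity, the general $\varphi$ follows quickly. For a simple Borel function $\varphi=\sum c_i\chi_{E_i}$,
\[
\hat\pi(\varphi\circ f)=\sum c_i\pi(\chi_{f^{-1}(E_i)})=\sum c_i\rho(\chi_{g^{-1}(E_i)})=\hat\rho(\varphi\circ g).
\]
For a general Borel $\varphi$, choose simple $\varphi_n\to\varphi$ pointwise on $\mathbb{C}$. Then $\varphi_n\circ f\to\varphi\circ f$ pointwise, hence in measure on $[0,1]$, and likewise for $g$. By the continuity in measure of $\hat\pi$ and $\hat\rho$ (Theorem \ref{Borel functional calculus}), both sides converge in measure. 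Since $\mathcal L^0(\mathcal M,\tau)$ is Hausdorff, the limits coincide.

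Alternatively, one can avoid unbounded spectral theory. Since $\hat\pi$ and $\hat\rho$ are $*$-isomorphisms, apply them to the bounded function $h=f/(1+|f|)$, rewritten as $h=f\,(1+(\bar f f)^{1/2})^{-1}$. This gives $\hat\pi(h)=T(1+|T|)^{-1}=\hat\rho(h')$ with $h'=g/(1+|g|)$. Here $|T|=(T^*T)^{1/2}$ is computed inside each commutative image, where square roots of positives are unique, so it is the same element for $\pi$ and $\rho$. This reduces the problem to bounded $f,g$, for which the bounded-case argument suffices. Composing with $\psi(z)=z/(1-|z|)$ on the unit disc then recovers general $\varphi$ via $\varphi\circ f=(\varphi\circ\psi)\circ h$.
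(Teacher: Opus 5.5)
\textbf{There is a genuine gap: the passage from bounded to unbounded $T$, which is where the whole difficulty of the lemma lies.} Your bounded case is sound: normality of $\pi$ follows from faithfulness of $\tau$, and then polynomials, continuous functions and a monotone-class argument give $\pi(\phi\circ f)=\phi(\pi(f))$ for bounded Borel $\phi$. Your final step from simple to general $\varphi$, via continuity of $\hat\pi,\hat\rho$ in measure, is also fine.

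In your main route, the identity $\hat\pi(\chi_E\circ f)=\chi_E(T)$ needs three facts:
\begin{enumerate}
\item the abstract element $\hat\pi(f)$ of the completion $\mathcal{L}^0(\mathcal{M},\tau)$ is the closed operator $\int f\,dE_\pi$, where $E_\pi(B)=\pi(\chi_B)$;
\item the product $TQ_k$ in $\mathcal{L}^0(\mathcal{M},\tau)$ is the closure of the operator product;
\item $Q_k$ reduces $T$.
\end{enumerate}
None of this follows from Theorem~\ref{Borel functional calculus}. It is the substance of Nelson's identification of $\mathcal{L}^0(\mathcal{M},\tau)$ with closed affiliated operators, together with the unbounded spectral theorem, and you leave it open as ``the main obstacle''. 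That identity already contains the lemma for $\varphi=\chi_E$, so this route essentially hands the statement over to external theory.

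Your self-contained alternative has a circular step. Uniqueness of positive square roots inside $\hat\pi(\mathcal{L}^0[0,1])$ and inside $\hat\rho(\mathcal{L}^0[0,1])$ separately says nothing about whether $\hat\pi(|f|)=\hat\rho(|g|)$, because these are different subalgebras. You would need uniqueness of positive square roots in all of $\mathcal{L}^0(\mathcal{M},\tau)$, which is the lemma for $\varphi(z)=|z|$.

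This can be repaired. Inverses in an algebra are unique, so $\pi((1+|f|^2)^{-1})=(1+T^*T)^{-1}=\rho((1+|g|^2)^{-1})\in\mathcal{M}$. Your bounded case applied with $s\mapsto(1/s-1)^{1/2}$, plus truncation and continuity in measure, then gives $\hat\pi(|f|)=\hat\rho(|g|)$. After that, your reduction via $h=f/(1+|f|)$ goes through.

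The paper needs none of this. Its key observation, which you used only in the bounded case, is that $p(T,T^*)$ is computed purely in the $*$-algebra $\mathcal{L}^0(\mathcal{M},\tau)$. Hence $\hat\pi(p\circ f)=p(T,T^*)=\hat\rho(p\circ g)$ for every polynomial $p(z,\bar z)$, with no boundedness assumption.

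The paper then takes the finite measure $\lambda=\mu f^{-1}+\mu g^{-1}$ on $\mathbb{C}$. Lusin and Stone--Weierstrass give polynomials $p_n\to\varphi$ $\lambda$-a.e. So $p_n\circ f\to\varphi\circ f$ and $p_n\circ g\to\varphi\circ g$ $\mu$-a.e., hence in measure. Continuity of $\hat\pi,\hat\rho$ then yields $\hat\pi(\varphi\circ f)=\lim_n p_n(T,T^*)=\hat\rho(\varphi\circ g)$.

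The sum measure $\lambda$ is what lets one sequence of polynomials serve both embeddings at once. It removes any need for spectral projections, normality of $\pi$, or a reduction to bounded $T$.
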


\textbf{Proof}. Let \(\lambda\) be the measure \(\mu f^{-1} + \mu g^{-1}\); i.e., for every Borel subset of \(\mathbb{C}\),
\[
\lambda(E) = \mu\left(f^{-1}(E)\right) + \mu\left(g^{-1}(E)\right).
\]
It follows from Lusin's theorem and the Stone-Weierstrass theorem that there is a sequence \(\{p_{n}(z, \bar{z})\}\) of polynomials such that, for every \(z \in \mathbb{C}\),
\[
\lim_{n \to \infty} p_{n}(z, \overline{z}) = \varphi(z) \quad \text{a.e.} \, (\lambda).
\]
Thus \(p_{n} \circ f \to \varphi \circ f\) and \(p_{n} \circ g \to \varphi \circ g\) a.e. \((\mu)\). Since almost everywhere convergence implies convergence in measure for finite measure spaces, it follows that
\[
p_{n}(T, T^{*}) = p_{n}\left(\hat{\pi}(f), \hat{\pi}(f)^{*}\right) = \hat{\pi}\left(p_{n} \circ f\right) \stackrel{\tau}{\to} \hat{\pi}(\varphi \circ f),
\]
and
\[
p_{n}(T, T^{*}) = p_{n}\left(\overline{\rho}(g), \overline{\rho}(g)^{*}\right) = \overline{\rho}\left(p_{n} \circ g\right) \stackrel{\tau}{\to} \overline{\rho}(\varphi \circ g).
\]

This means that our definition of \(\varphi(T)\) when \(T\) is a normal element of \(\mathcal{L}^{0}(\mathcal{M}, \tau)\) makes sense. The following corollary states a relation between the functional calculus and the topology of convergence in measure.

\begin{corollary}
If \(T\) is a normal element of \(\mathcal{L}^{0}(\mathcal{M}, \tau)\) and \(\{\varphi_{n}\}\) is a sequence of measurable functions on \(\mathbb{C}\) such that \(\varphi_{n} \to \varphi\) pointwise, then \(\varphi_{n}(T) \stackrel{\tau}{\to} \varphi(T)\).
\end{corollary}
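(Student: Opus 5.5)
We will reduce the statement to the classical fact that pointwise convergence implies convergence in measure on a finite measure space, transported into $\mathcal M$ by the isomorphism $\hat\pi$ of Theorem \ref{Borel functional calculus}.

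Since $T$ is normal, we choose a maximal chain $\mathcal C$ of projections in $\mathcal M$ with $T\in W^*(\mathcal C)^{-\tau}$. This is the same choice used to define the functional calculus; for normal $T$ one takes the chain to contain spectral projections of the real and imaginary parts, which commute, so they sit in one abelian algebra. Next we take the associated tracial embedding $\pi:L^\infty[0,1]\to W^*(\mathcal C)$ and its extension $\hat\pi:\mathcal L^0[0,1]\to W^*(\mathcal C)^{-\tau}$. We pick $f\in\mathcal L^0[0,1]$ with $\hat\pi(f)=T$. By the definition just justified in the preceding lemma, the definition is independent of the embedding, so
\[
\varphi_n(T)=\hat\pi(\varphi_n\circ f)\quad\text{and}\quad \varphi(T)=\hat\pi(\varphi\circ f).
\]

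Since $\varphi_n\to\varphi$ pointwise on $\mathbb C$, we get $\varphi_n\circ f\to\varphi\circ f$ pointwise at every point of $[0,1]$. The compositions are measurable because the $\varphi_n$ are Borel and $f$ is measurable. As $\mu$ is a finite measure, pointwise convergence implies convergence in measure in $\mathcal L^0[0,1]$. This follows from Egorov's theorem, or directly from dominated convergence applied to $\frac{|h_n|}{1+|h_n|}\le 1$, which gives $d(\varphi_n\circ f,\varphi\circ f)\to 0$.

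Finally, Theorem \ref{Borel functional calculus} says $\hat\pi$ is a homeomorphism with respect to convergence in measure. Hence $\hat\pi(\varphi_n\circ f)\stackrel{\tau}{\to}\hat\pi(\varphi\circ f)$, that is, $\varphi_n(T)\stackrel{\tau}{\to}\varphi(T)$.

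The only delicate point is the first step: justifying that a normal (possibly unbounded) $T$ lies in $W^*(\mathcal C)^{-\tau}$ for one maximal chain $\mathcal C$. We take this from the setup preceding the definition of $\varphi(T)$, as the paper does. Everything else is routine.
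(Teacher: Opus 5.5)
Your proof is correct and is the argument the paper intends. The paper states the corollary without proof, as an immediate consequence of Theorem~\ref{Borel functional calculus} ($\hat{\pi}$ is a homeomorphism for convergence in measure) combined with the fact that pointwise convergence implies convergence in measure on $[0,1]$; this is the same step already used in the proof of the preceding lemma. One small correction to your parenthetical remark: the spectral projections of $\mathrm{Re}\,T$ and $\mathrm{Im}\,T$ are generally not nested, so no chain contains both families. What is true is that the abelian von Neumann algebra they generate is countably generated, hence equals $W^{*}(A)$ for a single self-adjoint $A$. Extending the spectral chain $\{\chi_{(-\infty,s)}(A)\}$ to a maximal chain $\mathcal{C}$ then gives $T\in W^{*}(\mathcal{C})^{-\tau}$.
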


Suppose \(f: [0,1] \to [0, \infty)\) is measurable. Then there exists a unique nonincreasing right-continuous function \(f^{*}: [0,1] \to [0, \infty)\) such that \(f^{*} = f \circ \gamma\) for some invertible measure-preserving transformation \(\gamma: [0,1] \to [0,1]\). The function \(f^{*}\) is called the nonincreasing
rearrangement of  \(f\).

The formula for \(f^{*}\) is given by
\[
f^{*}(x) =
\begin{cases}
\sup \{ y : \mu(\{ f > y \}) > x \} & \text{if } 0 \leq x < 1, \\
\text{ess-inf}(f) & \text{if } x = 1.
\end{cases}
\]
We can also write \(f^{*}\) as
\[
f^{*}(x) =
\begin{cases}
\sup \left\{ y : \int_{0}^{1} \chi_{(y, \infty)} \circ f \, \mathrm{d}\mu > x \right\} & \text{if } 0 \leq x < 1, \\
\left\| \frac{1}{1 + f} \right\|_{\infty}^{-1} - 1 & \text{if } x = 1.
\end{cases}
\]

In \cite{FK}, Fack and Kosaki defined $\mu_s(T),$ the $generalized \ s$-$numbers$  of  an  operator $T$ in  $\mathcal L^0(\mathcal M, \tau)$ by
$$\mu_s(T)=\inf\{\|TE\|: E \mbox{ is a projection in }\mathcal M\  \mbox{with }\tau(1-E)\leq s\}, 0\leq s\leq 1.$$
The above definition admits an equivalent characterization as follows:
\[
\mu_{s}(T) =
\begin{cases}
\sup \left\{ y : \tau\left(\chi_{(y, \infty)}(|T|)\right) > s \right\} & \text{if } 0 \leq s < 1, \\
\left\| (1 + |T|)^{-1} \right\|^{-1} - 1 & \text{if } s = 1.
\end{cases}
\]

 From the definitions, there is a relation between nonincreasing rearrangements and the $s$-numbers of an element \(T \in \mathcal{L}^{0}(\mathcal{M}, \tau).\)  In fact, if \(\pi: \mathcal{L}^{\infty}[0,1] \to {W}^{*}(\mathcal{C})\) is a tracial embedding for some maximal chain \(\mathcal{C}\) of projections in \(\mathcal{M}\) with \(|T| \in {W}^{*}(\mathcal{C})^{-\tau}\), and if \(f \in \mathcal{L}^{0}(\mu)\) with \(\hat{\pi}(f) = |T|\), then for every \(s \in [0,1]\),
\[
\mu_{T}(s) = f^{*}(s).
\]
Thus there is an invertible measure-preserving transformation \(\gamma: [0,1] \to [0,1]\) such that \(f \circ \gamma = \mu_{T}\). In particular, \(f^{*}\) is independent of the choice of \(\mathcal{C}\) and \(\pi\).

\begin{definition}\label{equimeasurable}
Suppose  \(0 < S, T \in {L}^{1}(\mathcal{M}, \tau).\) Then  $S$ and $T$ are called $equimeasurable$ if \(\mu_{S} = \mu_{T}\).
\end{definition}

\section{Unitarily Invariant Norms on a finite von Neumann algebra $\mathcal M$}

\begin{definition} Suppose \(\mathcal{M}\) is a diffuse von Neumann algebra with a faithful normal tracial state \(\tau\).  A normalized unitarily invariant norm on \(\mathcal{M}\) is a norm \(\alpha: \mathcal{M} \to [0, \infty)\) such that:

(1) \(\alpha(UTV) = \alpha(T)\) for every \(T \in \mathcal{M}\) and unitaries \(U, V \in \mathcal{M}\),

(2) \(\alpha(1) = 1\).

Particularly, we say \(\alpha\) is continuous if for every \(\varepsilon > 0\), there exists a \(\delta > 0\) such that \(\alpha(P) \leq \varepsilon\) whenever \(P \in \mathcal{M}\) is a projection with \(\tau(P) < \delta\).
\end{definition}

\begin{remark}\label{relation between alpha and beta} For a \(II_1\) factor \(\mathcal{M}\), any unitarily invariant norm \(\alpha\) on $\mathcal M $ (See \cite{HN}) is obtained by first finding a symmetric gauge norm \(\beta\) on \({L}^{\infty}[0,1]\) (satisfying \(\beta(1) = 1\), \(\beta(f) = \beta(|f|) = \beta(f \circ \gamma)\) for all \(f \in {L}^{\infty}[0,1]\) and measure-preserving \(\gamma\)), then defining
\[
\alpha(T) = \beta(f) = \beta(f^{*}) \quad \text{where } \hat{\pi}(f) = |T|.
\]
Equivalently, there is a direct relation between $\alpha$ and $\beta$ via the $s$-numbers of $T$:
\[
\alpha(T) = \beta(\mu_{T}).
\]

\end{remark}

For an arbitrary von Neumann algebra \(\mathcal{M}\) with a faithful normal tracial state \(\tau\), not all norms are defined in this way. The ones that can be defined in this way are called  $ tracial\  gauge \ norms$\cite{FHNS}. Given \(\mathcal{M}\), we can find a \(II_1\) factor \(\mathcal{N}\) with trace \(\rho\) such that \(\mathcal{M} \subset \mathcal{N}\) and \(\rho|_{\mathcal{M}} = \tau\), e.g., \(\mathcal{M} \otimes \mathcal{L}_{\mathbb{F}_2}\). Then a unitarily invariant norm \(\alpha\) on \(\mathcal{M}\) is a tracial gauge norm if and only if there is a unitarily invariant norm \(\alpha_1\) on \(\mathcal{N}\) such that \(\alpha = \alpha_1|_{\mathcal{M}}\). The set of tracial gauge norms is convex and compact  with respect to the topology of pointwise convergence. For \(1 \leq p < \infty\), the norm \(\|\cdot\|_{p}\) defined by \(\|T\|_{p}=\tau(|T|^{p})^{\frac{1}{p}}\) is a tracial gauge norm.

A special family of tracial gauge norms are the {Ky-Fan} norms \({KF}_{t}\) for \(0 < t \leq 1\), defined by
\[
KF_{t}(T) = \frac{1}{t} \int_{0}^{t} \mu_{|T|}(x) \, dx.
\]

\begin{theorem}
Suppose \(N\) is a \(II_1\) factor and \(0 \leq S, T \in L^{1}(\mathcal N, \tau)\). The following are equivalent:
\begin{enumerate}
    \item \(S\) and \(T\) are equimeasurable.
    \item For any measurable function \(\varphi: [0, \infty) \to [0, \infty)\), \(\varphi(S)\) and \(\varphi(T)\) are equimeasurable.
    \item There exists a sequence \(\{U_n\}\) of unitary operators in \(\mathcal M\) such that for each \(n \in \mathbb{N}\):
    \begin{enumerate}
        \item \(S - U_n^* T U_n \in \mathcal{N}\),
        \item \(\|S - U_n^* T U_n\| \leq \frac{1}{2^{n - 1}}\).
    \end{enumerate}
    \item For every unitarily invariant norm \(\alpha\) on \(\mathcal N\), \(\alpha(S) = \alpha(T)\).
    \item For every \(0 < t \leq 1\), \(KF_{t}(S) = KF_{t}(T)\).
\end{enumerate}
\end{theorem}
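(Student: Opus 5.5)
I will prove the equivalences along the cycle $(1)\Rightarrow(3)\Rightarrow(4)\Rightarrow(5)\Rightarrow(1)$, and attach $(2)$ by showing $(1)\Leftrightarrow(2)$ separately. Throughout I will use the tracial embedding picture of Section 2. Since $\mathcal N$ is a $II_1$ factor, I choose maximal chains $\mathcal C_S$ and $\mathcal C_T$ containing the spectral projections of $S$ and $T$. I then take tracial embeddings $\pi_S,\pi_T$ with $\hat\pi_S(f)=S$ and $\hat\pi_T(g)=T$. By the remark preceding Definition \ref{equimeasurable}, $\mu_S=f^*$ and $\mu_T=g^*$.

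\textbf{$(1)\Leftrightarrow(2)$.} For $(2)\Rightarrow(1)$ I take $\varphi(x)=x$. For $(1)\Rightarrow(2)$, equimeasurability means $\tau(\chi_{(y,\infty)}(S))=\tau(\chi_{(y,\infty)}(T))$ for all $y$, so $f$ and $g$ have the same distribution. By the well-definedness lemma for the functional calculus, $\varphi(S)=\hat\pi_S(\varphi\circ f)$ and $\varphi(T)=\hat\pi_T(\varphi\circ g)$. Since $\varphi\circ f$ and $\varphi\circ g$ also have the same distribution, the trace identity in Theorem \ref{Borel functional calculus} gives equal spectral traces, and hence $\mu_{\varphi(S)}=\mu_{\varphi(T)}$.

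\textbf{$(1)\Rightarrow(3)$.} This is the main obstacle. By composing with invertible measure-preserving maps I may assume $f=g=\mu_S$ is nonincreasing. Then $S=\hat\pi_S(h)$ and $T=\hat\pi_T(h)$ with $h=\mu_S$. The plan is to approximate $h$ from below by step functions. Fix $n$ and let $h_n=\sum_k c_k\chi_{E_k}$, where the $E_k$ are finitely many intervals forming a partition of $[0,1]$, $0\le h-h_n\le 2^{-n}$ on the region where $h\le N_n$, and the unbounded tail $\{h>N_n\}$ is handled by one extra interval. On each $E_k$ the projections $\pi_S(\chi_{E_k})$ and $\pi_T(\chi_{E_k})$ have equal trace, hence are equivalent in the factor $\mathcal N$. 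I choose partial isometries $V_k$ between them and set $U_n=\sum_k V_k$, which is a unitary intertwining the two finite partitions. On the tail piece, the difference $S-U_n^*TU_n$ is unbounded in general, so I must intertwine more carefully. I will take $V$ on the tail to implement the isomorphism $\pi_T\circ\pi_S^{-1}$ between the diffuse abelian algebras $\pi_S(L^\infty(E))$ and $\pi_T(L^\infty(E))$, which exists because two maximal abelian chains of equal trace in a $II_1$ factor are unitarily conjugate on that corner. If this can be done on the tail piece, it may as well be done globally, making $S$ and $T$ exactly unitarily conjugate, which is stronger than $(3)$. If such an exact conjugation fails, I will fall back on the step-function construction, letting $\mathcal N$'s trace-equivalence comparison handle the bounded part and the intertwining unitary on the tail guarantee $S-U_n^*TU_n\in\mathcal N$ with norm at most $2^{-n+1}$.

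\textbf{$(3)\Rightarrow(4)\Rightarrow(5)\Rightarrow(1)$.} For $(3)\Rightarrow(4)$: unitary invariance gives $\alpha(U_n^*TU_n)=\alpha(T)$, and $|\alpha(S)-\alpha(U_n^*TU_n)|\le\alpha(S-U_n^*TU_n)\le\|S-U_n^*TU_n\|\to0$. Here I use that a normalized unitarily invariant norm satisfies $\alpha\le\|\cdot\|$, extended to $L^\alpha$ by continuity. The step $(4)\Rightarrow(5)$ is immediate, since each $KF_t$ is a tracial gauge norm. For $(5)\Rightarrow(1)$, the function $tKF_t(S)=\int_0^t\mu_S$ determines $\mu_S$ almost everywhere by differentiation. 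Right-continuity of $\mu_S$ and $\mu_T$ then gives $\mu_S=\mu_T$ everywhere.
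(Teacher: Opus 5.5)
Your arguments for $(1)\Leftrightarrow(2)$ and for $(3)\Rightarrow(4)\Rightarrow(5)\Rightarrow(1)$ are fine and essentially match the paper. The step $(1)\Rightarrow(3)$, however, has a genuine gap.

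\textbf{The false claim.} You assert that the diffuse abelian algebras $\pi_S(L^\infty(E))$ and $\pi_T(L^\infty(E))$, generated by two equal-trace maximal chains in a $II_1$ factor, are unitarily conjugate by a unitary implementing $\pi_T\circ\pi_S^{-1}$. This is false. Such a unitary would also conjugate the relative commutants. But a diffuse abelian subalgebra of a $II_1$ factor may or may not be a masa: for instance $A\otimes 1$ versus $A\bar\otimes A$ in $R\bar\otimes R$, with $A$ a diffuse masa in $R$. Generators of these two algebras with the same distribution are equimeasurable but not unitarily equivalent.

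This is exactly why (3) asserts only approximate unitary equivalence. As you yourself note, your claim would make $S$ and $T$ exactly unitarily conjugate. Your fallback does not repair this, because it again relies on ``the intertwining unitary on the tail'', which is the same object that need not exist. With finitely many steps plus one unbounded tail interval, nothing controls $S-U_n^*TU_n$ on the tail when $S$ is unbounded.

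\textbf{The fix.} Use countably many steps, so that there is no tail at all; this is the paper's argument for $(2)\Rightarrow(3)$.
\begin{enumerate}
\item Take $\varphi_n(t)=2^{-n}\lfloor 2^n t\rfloor$ on $[0,\infty)$. Then $0\le t-\varphi_n(t)\le 2^{-n}$ for every $t$, including the unbounded region.
\item Set $p_k=\chi_{[k2^{-n},(k+1)2^{-n})}(S)$ and $q_k=\chi_{[k2^{-n},(k+1)2^{-n})}(T)$ for $k\ge 0$. These form two countable orthogonal families of projections, each summing to $1$. In your picture they correspond to the sets $E_k=\{k2^{-n}\le h<(k+1)2^{-n}\}$, which are intervals because $h$ is nonincreasing.
\item By (1), $\tau(p_k)=\tau(q_k)$. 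Since $\mathcal N$ is a factor, there are partial isometries $V_k$ with $V_k^*V_k=p_k$ and $V_kV_k^*=q_k$.
\item The sum $U_n=\sum_k V_k$ converges strongly to a unitary satisfying $U_n\varphi_n(S)U_n^*=\varphi_n(T)$.
\item Hence $S-U_n^*TU_n=(S-\varphi_n(S))+U_n^*(\varphi_n(T)-T)U_n$ is bounded, so lies in $\mathcal N$, with norm at most $2^{1-n}$.
\end{enumerate}
No conjugacy of the diffuse parts is needed.
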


\begin{proof}
\((1) \Rightarrow (2)\). Suppose \(S\) and \(T\) are equimeasurable. By Definition \ref{equimeasurable}, it follows that \(\mu_T = \mu_S\). Hence there exist tracial embeddings \(\pi, \rho: L^{\infty}[0,1] \to \mathcal M\) such that \(\hat{\pi}(\mu_T) = T\) and \(\hat{\rho}(\mu_T) = S\). Therefore, \(\hat{\pi}(\varphi \circ \mu_T) = \varphi(T)\) and \(\hat{\rho}(\varphi \circ \mu_S) = \varphi(S)\). Thus \(\mu_{\varphi(T)} = (\varphi \circ \mu_T)^* = \mu_{\varphi(S)}\), which means  \(\varphi(T)\) and \(\varphi(S)\) are equimeasurable.

\((2) \Rightarrow (3)\). Assume condition (2) holds and let \(n \in \mathbb{N}\). Define \(\varphi_n: [0, \infty) \to [0, \infty)\) by
\[
\varphi_n(t) = \frac{1}{2^n} [ 2^n t ],
\]
where \([\cdot ]\) denotes the greatest integer function. Then \(0 \leq t - \varphi_n(t) \leq \frac{1}{2^n}\) for all \(t \in [0, \infty)\), which implies $t - \varphi_n(t) \in L^\infty{[0,1]}.$ Hence \(S - \varphi_n(S)\) and \(T - \varphi_n(T) \in \mathcal M\) with
\[
\|T - \varphi_n(T)\|, \|S - \varphi_n(S)\| \leq \frac{1}{n}.
\]
Noting that \(\varphi_n(t) = \sum_{k = 0}^{\infty} \frac{k}{2^n} \chi_{[k/2^n, (k + 1)/2^n)}(t)\), we have
\[
\varphi_n(S) = \sum_{k = 0}^{\infty} \frac{k}{2^n} \chi_{[k/2^n, (k + 1)/2^n)}(S), \quad \varphi_n(T) = \sum_{k = 0}^{\infty} \frac{k}{2^n} \chi_{[k/2^n, (k + 1)/2^n)}(T).
\]
By condition (2), \(\chi_{[k/2^n, (k + 1)/2^n)}(S)\) and \(\chi_{[k/2^n, (k + 1)/2^n)}(T)\) are equimeasurable, which implies
\[
\tau\left(\chi_{[k/2^n, (k + 1)/2^n)}(S)\right) = \tau\left(\chi_{[k/2^n, (k + 1)/2^n)}(T)\right)
\]
for all \(k \geq 0\).

Since \(\mathcal{N}\) is a \(II_{1}\) factor, the projections \(\chi_{\lfloor\frac{k}{2^{k}},\,\frac{k+1}{2^{k}}\rfloor}\left(S\right)\) and \(\chi_{\lceil\frac{k}{2^{k}},\,\frac{k+1}{2^{k}}\rceil}\left(T\right)\) are Murray von Neumann equivalent. Since the projections are mutually orthogonal and add to 1, there is a unitary operator \(U_{n}\in\mathcal{M}\) such that, for every \(k\geq 0\),

\[
U_{n}\chi_{\lfloor\frac{k}{2^{k}},\,\frac{k+1}{2^{k}}\rfloor}\left(S\right)U_{n}^{*}=\chi_{\lfloor\frac{k}{2^{k}},\,\frac{k+1}{2^{k}}\rfloor}\left(T\right).
\]
Thus \(U_{n}\varphi_{n}\left(S\right)U_{n}^{*}=\varphi_{n}\left(T\right).\) Hence

\[
\|U_{n}SU_{n}^{*}-T\|\leq\|U_{n}\left(S-\varphi_{n}\left(S\right)\right)U^{*}\| +\|\varphi_{n}\left(T\right)-T\|\leq\frac{1}{2^{n-1}}.
\]

(3) \(\Rightarrow\) (4). Suppose \(\alpha\) is a unitarily invariant norm on \(\mathcal{N}\). Then \(\alpha\leq\|\cdot\|\) on \(\mathcal{N}\). Thus, for every \(n\in\mathbb{N}\),

\[
\alpha\left(S\right)=\alpha\left(U_{n}SU_{n}\right)\leq\alpha\left(U_{n}SU_{n}-T\right)+\alpha\left(T\right)\leq\|U_{n}SU_{n}-T\|+\alpha\left(T\right).
\]
Therefore \(\alpha\left(S\right)\leq\alpha\left(T\right)\). Since \(\|U_{n}SU_{n}-T\|=\|S-U_{n}^{*}TU_{n}\|\,,\) we see that \(\alpha\left(T\right)\leq\alpha\left(S\right).\)

(4) \(\Rightarrow\) (5). It follows directly from the definition of the Ky-Fan norms and Remark \ref{relation between alpha and beta}.

(5) \(\Rightarrow\) (1). For every \(x\in\left[0,t\right],\) we have

\[
\int_{0}^{t}\mu_{S}\left(x\right)dx=tKF_{t}\left(S\right)=tKF_{t}\left(T\right)=\int_{0}^{t}\mu_{T}\left(x\right)dx.
\]

Since \(S,T\in L^{1}\left(\mathcal{M},\tau\right)\), \(\mu_{S},\mu_{T}\in L^{1}\left[0,1\right],\) so differentiating the above integrals gives us that \(\mu_{S}=\mu_{T}\) a.e. (\(\mu\)). Since \(\mu_{S}\) and \(\mu_{T}\) are nondecreasing right continuous functions on \(\left[0,1\right],\) and they are equal almost everywhere, we have \(\mu_{S}=\mu_{T}\). Thus \(S\) and \(T\) are equimeasurable.

\end{proof}
\section{Connection between $\alpha$-convergence and convergence in measure }

In this section we will prove that if \(\alpha\) is a continuous unitarily invariant norm, then \(\alpha\)-convergence implies convergence in measure. This will ensure that \(L^{\alpha}\left(\mathcal{M}\right)\subset\mathcal{L}^{0}\left(\mathcal{M},\tau\right)\). If \(\alpha\) is a unitarily invariant norm on \(\mathcal{M}\), we can extend the domain of \(\alpha\) to \(\mathcal{L}^{0}\left(\mathcal{M},\tau\right)\) by

\[
\alpha\left(T\right)=\sup\left\{\alpha\left(S\right):0\leq S\leq\left|T\right|\,,S\in\mathcal{M}\right\}
\]
with \(0\leq\alpha\left(T\right)\leq\infty\). We define \(L^{\alpha}\left(\mathcal{M},\tau\right)\) to be the completion of \(\mathcal{M}\) with respect to the norm \(\alpha\), and we define \(\mathcal{L}^{\alpha}\left(\mathcal{M},\tau\right)\) to be the \(T\in\mathcal{L}^{0}\left(\mathcal{M},\tau\right)\) for which \(\alpha\left(T\right)<\infty\).

It is clear that if \(P,Q\in\mathcal{P}\left(\mathcal{M}\right),\) then
\[
\tau\left(P\lor Q\right)\leq\tau\left(P\right)+\tau\left(Q\right).
\]
Since \(\tau\left(A\right)=\|A\|_{1}\) when \(A\geq 0,\) this says \(\|P\lor Q\|_{1}\leq\|P\|_{1}+\|Q\|_{1}\). We prove that this holds for any unitarily equivalent norm \(\alpha\) on \(\mathcal{M}\).

\begin{lemma}\label{monotone of norm}
Suppose \(\mathcal{M}\) is a von Neumann algebra and \(P,Q\) are projections in \(\mathcal{M}\). Then there is a unitary operator \(U\in\mathcal{M}\) such that

\begin{enumerate}
    \item \(P\) and \(U^{*}QU\) commute;
    \item \(P\lor Q=P\vee(U^{*}QU)\,;\)
    \item If \(\alpha\) is any unitarily invariant norm on \(\mathcal{M}\), then
    \[
    \alpha\left(P\lor Q\right)\leq\alpha\left(P\right)+\alpha\left(Q\right).
    \]
\end{enumerate}
\end{lemma}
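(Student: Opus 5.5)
I would build the unitary from Halmos' two-projection theory, using the finiteness of $\mathcal{M}$ to promote partial isometries to unitaries. Decompose $Q$ relative to $P$ by writing $Q = Q_1 + Q_2$, where $Q_1 = Q\wedge P$ and $Q_2 = Q - Q_1$. The plan is to rotate $Q_2$ onto a subprojection of $(P\vee Q) - P$ while leaving $Q_1$ fixed, and to do this with a unitary.

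First, the relevant projections are equivalent. By Kaplansky's parallelogram law, $(P\vee Q) - P \sim Q - (P\wedge Q) = Q_2$. So there is a partial isometry $V\in\mathcal{M}$ with $V^*V = Q_2$ and $VV^* = (P\vee Q)-P =: R$. Since $\mathcal{M}$ is finite, equivalent projections have equivalent complements: $Q_2\sim R$ implies $1-Q_2\sim 1-R$. Choose $W$ with $W^*W = 1-Q_2$ and $WW^* = 1-R$, and take $U_0 = V+W$ as a unitary with $U_0Q_2U_0^* = R$.

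Second, $U_0$ must not disturb $Q_1$. It is cleaner to work inside the corner $E=P\vee Q$: there $Q_2\sim R$, and $Q_1\le P=E-R$, so $Q_1\perp R$ and also $Q_1 \perp Q_2$. Use the finiteness of $E\mathcal{M}E$ to extend $V$ by a partial isometry $W'$ that maps $E-Q_2$ onto $E-R=P$. We want $W'$ to fix $Q_1$, which requires $E-Q_2-Q_1\sim P-Q_1$. This holds by finiteness, since $Q_2\sim R$ in the finite algebra $(E-Q_1)\mathcal{M}(E-Q_1)$, which contains both $Q_2$ and $R$. Then set
\[ U = V^{*}+W'^{*}+Q_1\text{-part} + (1-E), \]
that is, $U^*$ acts as $V$ on $Q_2$, as the identity on $Q_1$, as a partial isometry from $E-Q_1-Q_2$ onto $P-Q_1$, and as the identity on $1-E$. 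Then $U^*QU = Q_1 + R$. This commutes with $P$, because $Q_1\le P$ and $R\perp P$, which gives (1). Also $P\vee(Q_1+R) = P + R = P\vee Q$, which gives (2).

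Third, for (3), the projections $P$ and $Q' = U^*QU$ commute, so $P\vee Q' = P + Q' - PQ' = P + (Q'-PQ')$. Here $Q'-PQ' = Q'(1-P)$ is a projection, and its norm is controlled by that of $Q'$: since $1-P$ and $1-2P$ are contractions (the latter unitary), we have $Q'(1-P) = \tfrac12\big(Q' + Q'(1-2P)\big)$. Unitary invariance then gives $\alpha(Q'(1-P))\le \alpha(Q')=\alpha(Q)$. Hence
\[ \alpha(P\vee Q)=\alpha(P\vee Q')\le\alpha(P)+\alpha(Q'(1-P))\le \alpha(P)+\alpha(Q). \]

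The main obstacle is the second step: building an actual unitary, not just a partial isometry, that fixes $Q_1$. It relies on finiteness to guarantee equivalence of the complements, so this is where the care goes. If that bookkeeping fails, I would fall back on the center-valued trace criterion in finite algebras, under which equal center-valued traces imply equivalence.
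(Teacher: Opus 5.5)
Your proof is correct, but it reaches the unitary by a different route from the paper's.

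The paper puts the pair $(P,Q)$ in Halmos' canonical form. It then writes down an explicit unitary in $W^*(P,Q)$: the identity on the four non-generic summands, and a rotation built from the angle operators $s,c$ on the generic part. This unitary carries $Q$ onto $P\wedge Q$ plus the part of $1-P$ lying under $P\vee Q$. That is exactly your target $Q_1+R=(P\wedge Q)+\bigl((P\vee Q)-P\bigr)$.

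So both arguments rotate $Q$ to the same projection. You obtain the unitary abstractly from comparison theory, using Kaplansky's parallelogram law and finiteness of corners. The paper obtains it concretely from the two-projection structure. Because that unitary lives in $W^*(P,Q)$, the paper needs no finiteness, which matches the hypothesis that $\mathcal{M}$ is an arbitrary von Neumann algebra.

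Your appeal to finiteness is in fact avoidable. Note that $E-Q_1-Q_2=(P\vee Q)-Q$ and $P-Q_1=P-(P\wedge Q)$, and these are equivalent by the parallelogram law with $P$ and $Q$ interchanged. Hence the partial isometry $W'$, and with it your unitary (acting as $V$ on $Q_2$, the identity on $Q_1$, $W'$ on $E-Q$, and the identity on $1-E$), exists in any von Neumann algebra. As written, your argument covers only finite $\mathcal{M}$, which is the only case the paper later uses. The operator $U_0$ from your first paragraph is never needed.

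For (3), the paper passes from $P\vee Q'\le P+Q'$ to $\alpha(P\vee Q')\le\alpha(P+Q')$. This silently uses monotonicity of $\alpha$ on positive operators, which itself requires a Russo--Dye type argument. Your identity $Q'(1-P)=\tfrac12\bigl(Q'+Q'(1-2P)\bigr)$ gets the same bound from the triangle inequality and unitary invariance alone, so that step is more self-contained than the paper's.
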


\begin{proof}
From Halmos's representation of two projections, we can write
\[
P = I \oplus 0 \oplus I \oplus 0 \oplus \begin{pmatrix} I & 0 \\ 0 & 0 \end{pmatrix}
\]
and
\[
Q = I \oplus 0 \oplus 0 \oplus I \oplus \begin{pmatrix} c^{2} & cs \\ cs & s^{2} \end{pmatrix},
\]
where \(0 \leq s, c \leq 1\), \(\ker s = \ker(1 - s) = \ker c = \ker(1 - c) = 0\), \(sc = cs\), and \(s^{2} + c^{2} = 1\). In this case,
\[
W^{*}(P, Q) = \mathbb{C}I \oplus \mathbb{C}I \oplus \mathbb{C}I \oplus \mathbb{C}I \oplus \mathbb{M}_{2}(W^{*}(s, c)).
\]
There is a unitary
\[
U = I \oplus I \oplus I \oplus I \oplus \begin{pmatrix} s & -c \\ c & s \end{pmatrix} \in W^{*}(P, Q)
\]
such that
\[
Q^{\prime} = U^{*}QU = I \oplus 0 \oplus 0 \oplus I \oplus \begin{pmatrix} 0 & 0 \\ 0 & I \end{pmatrix}.
\]
Then \(PQ^{\prime} = Q^{\prime}P\) and
\[
P \lor Q^{\prime} = P \lor Q = I \oplus 0 \oplus I \oplus I \oplus \begin{pmatrix} I & 0 \\ 0 & I \end{pmatrix}.
\]

If \(\alpha\) is a unitarily invariant norm on \(\mathcal{M}\), then \(P \lor Q^{\prime} \leq P + Q^{\prime}\) since \(P\) and \(Q^{\prime}\) commute. Therefore
\[
\alpha(Q^{\prime}) = \alpha(U^{*}QU) = \alpha(Q),
\]
and
\[
\alpha(P \lor Q) = \alpha(P \lor Q^{\prime}) \leq \alpha(P) + \alpha(Q^{\prime}) = \alpha(P) + \alpha(Q).
\]
\end{proof}
We now prove that \(\alpha\)-convergence in \(\mathcal{L}^{0}(\mathcal{M}, \tau)\) implies \(\tau\)-convergence.

\begin{theorem}\label{alpha convergence implies in measure}
Suppose \(\alpha\) is a continuous unitarily invariant norm on a von Neumann algebra \(\mathcal{M}\) with a faithful normal tracial state \(\tau\). If \(\{T_{n}\}\) is a sequence in \(\mathcal{M}\) and \(\alpha(T_{n}) \to 0\), then \(T_{n} \stackrel{\tau}{\rightarrow} 0\).
\end{theorem}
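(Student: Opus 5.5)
The plan is to use criterion (2) of Lemma \ref{Convergence in measure Lemma}: I will show that for each fixed $\varepsilon>0$ the projections $E_n=\chi_{[\varepsilon,\infty)}(|T_n|)$ satisfy $\tau(E_n)\to 0$. The argument has two steps. First, $\alpha(E_n)\to 0$. Second, $\alpha$-smallness of projections forces $\tau$-smallness.

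\textbf{Step 1: $\alpha(E_n)\le \alpha(T_n)/\varepsilon$.} I first record the ideal inequality $\alpha(XY)\le \|X\|\,\alpha(Y)$ for $X,Y\in\mathcal M$. It follows from unitary invariance and the triangle inequality, since every contraction in a von Neumann algebra is the average of two unitaries. Since $T_n=U_n|T_n|$ with $U_n$ unitary, we have $\alpha(|T_n|)=\alpha(T_n)$. Now let $g(t)=t^{-1}\chi_{[\varepsilon,\infty)}(t)$, so that $\|g(|T_n|)\|\le 1/\varepsilon$. Then $E_n=g(|T_n|)\,|T_n|$, and therefore $\alpha(E_n)\le \varepsilon^{-1}\alpha(|T_n|)=\varepsilon^{-1}\alpha(T_n)\to 0$. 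The same ideal inequality gives monotonicity on projections: if $Q\le R$ then $Q=QR$, so $\alpha(Q)\le\alpha(R)$.

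\textbf{Step 2: if $P_n$ are projections with $\alpha(P_n)\to0$, then $\tau(P_n)\to 0$.} I argue by contradiction. Suppose not. Passing to a subsequence, we may assume $\tau(P_n)\ge\delta>0$ and $\alpha(P_n)<2^{-n}$. Put $Q_m=\bigvee_{n\ge m}P_n$ and $Q_{m,N}=\bigvee_{n=m}^N P_n$.

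\begin{itemize}
\item By iterating Lemma \ref{monotone of norm}(3), $\alpha(Q_{m,N})\le\sum_{n\ge m}2^{-n}=2^{1-m}$.
\item As $N\to\infty$, $Q_{m,N}\uparrow Q_m$ strongly. By normality of $\tau$, $\tau(Q_m-Q_{m,N})\to0$.
\item The continuity hypothesis on $\alpha$ then gives $\alpha(Q_m-Q_{m,N})\to 0$. Hence $\alpha(Q_m)\le 2^{1-m}$.
\end{itemize}

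The $Q_m$ decrease and each satisfies $\tau(Q_m)\ge\tau(P_m)\ge\delta$. So $Q=\bigwedge_m Q_m$ has $\tau(Q)\ge\delta$ by normality of $\tau$, and in particular $Q\ne0$. On the other hand $Q\le Q_m$, so monotonicity gives $\alpha(Q)\le 2^{1-m}$ for every $m$. Thus $\alpha(Q)=0$, contradicting that $\alpha$ is a norm.

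Combining the two steps, $\tau(E_n)\to0$ for every $\varepsilon>0$, and Lemma \ref{Convergence in measure Lemma} yields $T_n\stackrel{\tau}{\to}0$.

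The main obstacle is Step 2. There is no pointwise lower bound of $\alpha$ by a multiple of $\|\cdot\|_1$ in general; a commutative weighted $L^1$ norm already shows this. So the argument must go through the join/intersection trick. Its delicate point is passing from the finite joins $Q_{m,N}$ to the infinite join $Q_m$, and that is exactly where the continuity hypothesis on $\alpha$ is used.
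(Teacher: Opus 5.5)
Your proof is correct and follows the paper's argument. You show $\alpha(\chi_{[\varepsilon,\infty)}(|T_n|))\to 0$, then derive a contradiction from a subsequence with $\tau(P_n)\ge\delta$ and $\alpha(P_n)<2^{-n}$, using Lemma \ref{monotone of norm}, continuity of $\alpha$ on the tails $Q_m-Q_{m,N}$, and the decreasing limit $Q=\bigwedge_m Q_m$.

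Your small variations are sound. The bound $\alpha(E_n)\le\varepsilon^{-1}\alpha(T_n)$ via the ideal inequality is valid because $\mathcal M$ is finite, so every contraction is an average of two unitaries; it also avoids the paper's misprinted constant $1-\frac{1}{\varepsilon}$. Using monotonicity instead of continuity to get $\alpha(Q)\le\alpha(Q_m)$ is equally fine.
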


\begin{proof}
Suppose \(\alpha(T_{n}) \to 0\). Let \(0 < \varepsilon < 1\), and let \(P_{n} = \chi_{[\varepsilon, \infty)}(|T_{n}|)\). For each \(n \in \mathbb{N}\), let
\[
f_{n}(t) = \chi_{[\varepsilon, \infty)}(t) \frac{t}{1 + t}
\]
on \([0, 1]\). We have
\[
\left(1 - \frac{1}{\varepsilon}\right) \chi_{[\varepsilon, \infty)}(t) \leq f_{n}(t) \leq t.
\]
Thus,
\[
|T_{n}| \geq P_{n}|T_{n}|(1 + |T_{n}|)^{-1} \geq \left(1 - \frac{1}{\varepsilon}\right) P_{n}.
\]
Hence,
\[
\alpha(T_{n}) \geq \left(1 - \frac{1}{\varepsilon}\right) \alpha(P_{n}),
\]
which implies \(\alpha(P_{n}) \to 0\).\\

\noindent\textbf{Claim:} \(\tau(P_{n}) \to 0\).

Assume, via contradiction, that \(\tau(P_{n}) \not\to 0\). Then, by considering an appropriate subsequence, we can assume that there is a \(\delta > 0\) such that, for every \(n \geq 1\),
\[
\alpha(P_{n}) < \frac{1}{2^{n}} \quad \text{and} \quad \tau(P_{n}) \geq \delta.
\]
Suppose \(N \in \mathbb{N}\). It follows from Lemma \ref{monotone of norm} and induction that
\[
\alpha\left(\bigvee_{N \leq k \leq n} P_{k}\right) \leq \sum_{N \leq k \leq n} \alpha(P_{k})
\]
for each integer \(n\).  Noting that
\[
\bigvee_{N \leq k \leq n} P_{k} \rightarrow \bigvee_{N \leq k < \infty} P_{k} = Q_{N}
\]
in the weak* topology, so
\[
\tau(Q_{N} - \bigvee_{N \leq k \leq n} P_{k}) \to 0.
\]
Since \(\alpha\) is continuous,
\[
|\alpha(Q_{N}) - \alpha(\bigvee_{N \leq k \leq n} P_{k})| \leq \alpha(Q_{N} - \bigvee_{N \leq k \leq n} P_{k}) \to 0.
\]
Thus,
\[
\alpha(Q_{N}) = \lim_{n \to \infty} \alpha\left(\bigvee_{N \leq k \leq n} P_{k}\right) \leq \lim_{n \to \infty} \sum_{N \leq k \leq n} \alpha(P_{k}) \leq \sum_{k=N}^{\infty} \frac{1}{2^{k}} = \frac{1}{2^{N}}.
\]

Also, observing that \(Q_{1} \geq Q_{2} \geq Q_{3} \geq \cdots\), so there is a projection \(Q\) such that \(Q_{n} \to Q\) in the weak* topology. Thus, \(\tau(Q - Q_{N}) \to 0\). Since \(\tau\) is continuous, we know \(\alpha(Q - Q_{N}) \to 0\). Therefore,

\[
\alpha(Q) = \lim_{N \to \infty} \alpha(Q_{N}) = 0.
\]

However,

\[
\tau(Q) = \lim_{N \to \infty} \tau(Q_{N}) \geq \limsup_{N \to \infty} \tau(P_{N}) \geq \delta.
\]
Thus \(Q \neq 0\), but \(\alpha(Q) = 0\), which contradicts the fact that \(\alpha\) is a norm. This proves the claim, i.e., $\tau(P_n)=\tau(\chi_{[\varepsilon, \infty)}(|T_{n}|))\rightarrow 0$. By Lemma \ref{Convergence in measure Lemma}, we see \(T_{n} \stackrel{\tau}{\rightarrow} 0\).
\end{proof}
\begin{corollary}
Suppose \(\alpha\) is a continuous unitarily invariant norm on a von Neumann algebra \(\mathcal{M}\) with a faithful normal tracial state \(\tau\). Then we have
\[
L^{\alpha}(\mathcal{M}, \tau) \subset \mathcal{L}^{\alpha}(\mathcal{M}, \tau) \subset \mathcal{L}^{0}(\mathcal{M}, \tau).
\]
\end{corollary}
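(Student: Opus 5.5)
The plan is to build the inclusion $L^{\alpha}(\mathcal M,\tau)\subset\mathcal L^{0}(\mathcal M,\tau)$ by extending the identity map of $\mathcal M$ through Cauchy sequences. Then I would show that the extension is injective and that its range lies in $\mathcal L^{\alpha}(\mathcal M,\tau)$. The inclusion $\mathcal L^{\alpha}\subset\mathcal L^{0}$ holds by definition, since $\mathcal L^{\alpha}$ is defined as a subset of $\mathcal L^{0}$.

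\emph{Step 1 (the map).} Let $\{T_n\}\subset\mathcal M$ be $\alpha$-Cauchy. Applying Theorem \ref{alpha convergence implies in measure} to $T_{n_k}-T_{m_k}$ along arbitrary index sequences with $n_k,m_k\to\infty$ shows that $\{T_n\}$ is Cauchy in measure. So it has a limit $j(\{T_n\})\in\mathcal L^{0}(\mathcal M,\tau)$. Interlacing two $\alpha$-equivalent Cauchy sequences shows that this limit depends only on the $\alpha$-class. This gives a linear map $j:L^{\alpha}(\mathcal M,\tau)\to\mathcal L^{0}(\mathcal M,\tau)$ that restricts to the identity on $\mathcal M$ and is continuous from the $\alpha$-topology to the measure topology.

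\emph{Step 2 (lower semicontinuity).} The key tool is the inequality
\[
\alpha(X)\le\liminf_n\alpha(X_n)\quad\text{whenever } X_n\in\mathcal M,\ X_n\stackrel{\tau}{\to}X\in\mathcal M .
\]
I would prove it by choosing projections $E$ with $\tau(1-E)$ small such that $\|(X_n-X)E\|$ is small for large $n$; Lemma \ref{Convergence in measure Lemma}(3) supplies these. Then
\[
\alpha(X)\le\alpha(XE)+\alpha(X(1-E))\le \alpha(X_nE)+\|(X-X_n)E\|+\|X\|\,\alpha(1-E).
\]
Finally $\alpha(X_nE)\le\alpha(X_n)$, and $\alpha(1-E)$ is small by continuity of $\alpha$.

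\emph{Step 3 (injectivity).} Suppose $j(x)=0$, where $x$ is the $\alpha$-limit of $\{T_n\}$. For fixed $n$ we have $T_n-T_m\stackrel{\tau}{\to}T_n$ as $m\to\infty$. Step 2 then gives $\alpha(T_n)\le\liminf_m\alpha(T_n-T_m)$, which is small for large $n$. Hence $x=0$ in $L^{\alpha}$.

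\emph{Step 4 (range in $\mathcal L^{\alpha}$).} Let $T=j(x)$. I need $\alpha(S)\le\sup_n\alpha(T_n)<\infty$ for every $S\in\mathcal M$ with $0\le S\le|T|$. Since $S\le|T|$, there is a contraction $B\in\mathcal M$ with $S^{1/2}=B|T|^{1/2}$, so $S=B|T|B^*=BV^*TB^*$ with $V$ the unitary from the polar decomposition. Then $X_n:=BV^*T_nB^*\in\mathcal M$ satisfies $X_n\stackrel{\tau}{\to}S$ by continuity of multiplication in $\mathcal L^0$. Moreover $\alpha(X_n)\le\alpha(T_n)$, using $\alpha(AYC)\le\|A\|\,\alpha(Y)\,\|C\|$; this follows from unitary invariance, since a contraction is an average of two unitaries. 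Step 2 then yields $\alpha(S)\le\liminf\alpha(T_n)=\alpha(x)$. Taking the supremum over $S$ gives $\alpha(T)\le\alpha(x)<\infty$, so $T\in\mathcal L^{\alpha}(\mathcal M,\tau)$.

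\emph{Main obstacle.} I expect the main difficulty in Step 2, specifically in choosing $E$ uniformly enough in $n$. Lemma \ref{Convergence in measure Lemma}(3) gives a sequence of projections $P_n$ with $\tau(P_n)\to0$ and $\||X_n-X|(1-P_n)\|\to0$, but this controls $|X_n-X|$ rather than $X_n-X$ itself. I would try to handle this with $\|(X_n-X)E\|=\||X_n-X|E\|$ and the choice $E=1-P_n$. The estimate in Step 2 only needs the bound for one $n$ at a time, so taking $E$ depending on $n$ suffices.
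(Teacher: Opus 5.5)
Your proof is correct, and it is more complete than what the paper offers. The paper gives no separate proof of this corollary. It treats it as an immediate consequence of Theorem \ref{alpha convergence implies in measure}, which is exactly your Step 1: $\alpha$-Cauchy sequences in $\mathcal M$ are Cauchy in measure, hence converge in $\mathcal L^{0}(\mathcal M,\tau)$.

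Taken alone, that only produces a linear map $j:L^{\alpha}(\mathcal M,\tau)\to\mathcal L^{0}(\mathcal M,\tau)$. It does not show that $j$ is injective. Nor does it show that $j(x)$ has finite extended norm $\sup\{\alpha(S):0\le S\le|j(x)|,\ S\in\mathcal M\}$, which is what the middle inclusion asserts.

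Your Fatou-type lower semicontinuity (Step 2), obtained from Lemma \ref{Convergence in measure Lemma}(3) and continuity of $\alpha$, is exactly the ingredient that closes both gaps:
\begin{itemize}
\item Step 3 gives injectivity. The paper later uses this silently in part (4) of its dominated convergence theorem, where it concludes $A=T$ from $T_n\to A$ and $T_n\to T$ in measure.
\item Step 4 gives the bound $\alpha(j(x))\le\alpha(x)$.
\end{itemize}
Your handling of the ``main obstacle'' is right: $\|(X_n-X)(1-P_n)\|=\||X_n-X|(1-P_n)\|$, and letting $E$ depend on $n$ is all that is needed.

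Three routine points should be written out explicitly:
\begin{itemize}
\item Section 4 does not assume $\alpha(1)=1$, so in Step 2 use $\alpha(Y)\le\alpha(1)\|Y\|$.
\item The factorization $S^{1/2}=B|T|^{1/2}$ with $|T|$ unbounded needs justification. Positivity of $|T|-S$ in $\mathcal L^{0}$ gives $\|S^{1/2}\xi\|\le\||T|^{1/2}\xi\|$ on $D(|T|)$. This extends to $D(|T|^{1/2})$ because $D(|T|)$ is a core. Then define $B$ on the range of $|T|^{1/2}$, and check $B\in\mathcal M$ by commutation with the unitaries of $\mathcal M'$.
\item A contraction is an average of two unitaries here because $\mathcal M$ is finite. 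Write $A=V|A|$ with $V$ unitary and $|A|=\frac{1}{2}(W+W^{*})$, where $W=|A|+i(1-|A|^{2})^{1/2}$; then $A=\frac{1}{2}(VW+VW^{*})$.
\end{itemize}
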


\bigskip

\section{Dominated Convergence Theorems on $L^\alpha(\mathcal M, \tau)$}\label{section:counterexamples}

In this section, we prove a very general noncommutative dominated convergence theorem, with a key idea being a very relaxed notion of domination. Suppose \(\mathcal{M}\) is a diffuse von Neumann algebra with a faithful normal trace \(\tau\). Let \(\mathcal{P}(\mathcal{M})\) denote the set of all projections in \(\mathcal{M}\).

\begin{definition}\label{equicontinuous}
Suppose \(\alpha\) is a continuous normalized unitarily invariant norm on \(\mathcal{M}\). Then \(\mathcal{S} \subset L^{\alpha}(\mathcal{M}, \tau)\) is called \(\alpha\)-$equicontinuous,$ if for every \(\varepsilon > 0\), there is a \(\delta > 0\) such that, for every \(S \in \mathcal{S}\) and every projection \(P \in \mathcal{M}\),

\[
\tau(P) < \delta \Longrightarrow \alpha(PS) \leq \varepsilon.
\]
\end{definition}

We define \(d_{\mathcal{S}}: (0, 1] \rightarrow (0, 1]\) by
\[
d_{\mathcal{S}}(\varepsilon) = \sup \left\{ 0 < \delta \leq 1 : \forall S \in \mathcal{S} \text{ and } \forall P \in \mathcal{P}(\mathcal{M}), \ [\tau(P) < \delta \Longrightarrow \alpha(PS) \leq \varepsilon] \right\}.
\]
That is, \(d_{\mathcal{S}}(\varepsilon)\) is the largest \(\delta\) with \(0 < \delta \leq 1\) such that \(\tau(P) < \delta \Longrightarrow \alpha(PS) \leq \varepsilon\) is true for all \(S \in \mathcal{S}\) and all \(P \in \mathcal{P}(\mathcal{M})\).

\begin{definition}\label{alpha dominating set} The function \(d_{\mathcal{S}},\) as defined above, is called the \(\alpha\)-dominating function for \(\mathcal{S}\). We say an element \(T \in \mathcal{L}^{0}(\mathcal{M}, \tau)\) is \(\alpha\)-dominated by \(\mathcal{S}\) if and only if
\[
\forall P \in \mathcal{P}(\mathcal{M}), \quad [\tau(P) < d_{\mathcal{S}}(\varepsilon) \Longrightarrow \alpha(PT) \leq \varepsilon],
\]
and we denote the collection of all such operators by
 $$\mathcal{D}_{\alpha}(\mathcal{S}) = \mathcal{D}_{\alpha}(\mathcal{S}, \mathcal{M}) = \left\{ T \in \mathcal{L}^{0}(\mathcal{M}, \tau) : T \text{ is \(\alpha\)-dominated by \(\mathcal{S}\)} \right\}.$$
\end{definition}

\begin{lemma}\label{alpha equicontinuous}
Suppose \(\alpha\) is a continuous normalized unitarily invariant norm and \(T \in L^{\alpha}(\mathcal{M})\). Then \(\{T\}\) is \(\alpha\)-equicontinuous.
\end{lemma}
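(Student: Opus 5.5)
We will show that for every $\varepsilon>0$ there is $\delta>0$ with $\tau(P)<\delta\Rightarrow\alpha(PT)\le\varepsilon$. The approach is to approximate $T$ by a bounded operator and then use continuity of $\alpha$.

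\emph{Approximation.} Since $T\in L^{\alpha}(\mathcal M,\tau)$, which is the $\alpha$-completion of $\mathcal M$, we can choose $A\in\mathcal M$ with $\alpha(T-A)\le \varepsilon/2$. For any projection $P$, the triangle inequality gives
\[
\alpha(PT)\le \alpha(P(T-A))+\alpha(PA).
\]

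\emph{First term.} We need $\alpha(PX)\le\alpha(X)$ for $X\in L^\alpha$ and $\|P\|\le 1$. For $X\in\mathcal M$ this is the standard fact that a unitarily invariant norm satisfies $\alpha(BX)\le\|B\|\alpha(X)$. To see it, write a contraction $B$ as an average of two unitaries, $B=\tfrac12(U_1+U_2)$ (valid when $B=B^*$, which holds for $B=P$). Unitary invariance and the triangle inequality then give $\alpha(PX)\le\alpha(X)$. The inequality extends to $L^{\alpha}$ by density and continuity of left multiplication by $P$ in $\alpha$. Hence $\alpha(P(T-A))\le\varepsilon/2$ for every $P$.

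\emph{Second term.} We have $\alpha(PA)\le\|A\|\,\alpha(P)$. To see this, use $|PA|^2=A^*PA\le\|A\|^2P$ and the same multiplication bound, so $\alpha(PA)=\alpha(A^*P)\le\|A^*\|\alpha(P)$. Here $\alpha(A^*P)=\alpha(PA)$ because $\alpha(X^*)=\alpha(X)$, which follows from the polar decomposition with a unitary in a finite algebra, since both equal $\alpha(|X|)$ up to unitaries. By continuity of $\alpha$, choose $\delta>0$ such that $\tau(P)<\delta$ implies $\alpha(P)\le \varepsilon/(2(\|A\|+1))$. Then $\alpha(PA)\le\varepsilon/2$, and so $\alpha(PT)\le\varepsilon$.

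\emph{Main obstacle.} The only delicate point is making sense of $PT$ and its $\alpha$-norm for $T\in L^\alpha$ that is not in $\mathcal M$, together with the contraction inequality $\alpha(PX)\le\alpha(X)$ on the completion. I would handle this by defining $PT$ as the $\alpha$-limit of $PA_n$ where $A_n\to T$ in $\alpha$. This limit exists because $\alpha(P(A_n-A_m))\le\alpha(A_n-A_m)$.
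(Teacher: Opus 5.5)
Your proof is the paper's proof: approximate $T$ in $\alpha$ by some $A\in\mathcal M$, split $\alpha(PT)\le\alpha(P(T-A))+\alpha(PA)\le\alpha(T-A)+\|A\|\,\alpha(P)$, and use continuity of $\alpha$. Your choice of $\|A\|+1$ is a tidy substitute for the paper's requirement $A\neq 0$, and the paper simply takes both multiplication bounds as standard.

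One slip is in your side justification of the second bound. The inequality $|PA|^2=A^*PA\le\|A\|^2P$ is false in general: if $P\neq 0$ and $U$ is a unitary with $U^*PU=1-P$, then $U^*PU\not\le P$. The true inequality is $|A^*P|^2=PAA^*P\le\|A\|^2P$.

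What you actually use is $\alpha(A^*P)\le\|A\|\,\alpha(P)$ with a non-self-adjoint multiplier. Your two-unitary averaging trick does not cover that case as written. It follows at once by writing $A^*=V|A^*|$ with $V$ unitary, which is possible since $\mathcal M$ is finite, and applying the self-adjoint case to $|A^*|/\|A\|$. Alternatively, use Russo--Dye, as the paper does in part (3) of its main theorem.
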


\begin{proof}
Suppose \(T \in L^{\alpha}(\mathcal{M})\).  Then for any \(\varepsilon > 0\), there is an \(0 \neq A \in \mathcal{M}\) such that \(\alpha(T - A) < \varepsilon/2\). Since \(\alpha\) is continuous, there is a \(\delta > 0\) such that if \(P \in \mathcal{P}(\mathcal{M})\) and \(\tau(P) < \delta\), then \(\alpha(P) < \frac{\varepsilon}{2\|A\|}\), which implies
\[
\alpha(PT) \leq \alpha(P(T - A)) + \alpha(PA) \leq \|P\|\alpha(T - A) + \alpha(P)\|A\| \leq \varepsilon/2 + \varepsilon/2 = \varepsilon.
\] This show that $\{T\}$ is \(\alpha\)-equicontinuous.
 \end{proof}

\begin{theorem}\label{DCT}
Suppose \(\mathcal{M}\) is a diffuse von Neumann algebra with a faithful normal trace \(\tau\), \(\alpha\) is a continuous normalized unitarily invariant norm on \(\mathcal{M}\), and \(\mathcal{S} \subset L^{\alpha}(\mathcal{M}, \tau)\) is \(\alpha\)-equicontinuous. Then

\begin{enumerate}
    \item \(\mathcal{S} \subset \mathcal{D}_{\alpha}(\mathcal{S})\).
    \item \(\mathcal{D}_{\alpha}(\mathcal{S})\) is convex.

    \item[(3)] If \(T \in \mathcal{D}_{\alpha}(\mathcal{S})\) and \(A, B \in \mathcal{M}\) with \(\|A\|, \|B\| \leq 1\), then \(ATB \in \mathcal{D}_{\alpha}(\mathcal{S})\).

    \item[(4)] If \(S, S_{1}, S_{2}, \ldots \in \mathcal{D}_{\alpha}(\mathcal{S})\), then \(S_{n} \to S\) in measure if and only if
    \[
    \alpha(S_{n} - S) \to 0.
    \]
    \item[(5)] The \(\alpha\)-topology and the topology of convergence in measure coincide on \(\mathcal{D}_{\alpha}(\mathcal{S})\).

    \item[(6)] \(\mathcal{D}_{\alpha}(\mathcal{S}) \subset L^{\alpha}(\mathcal{M},\tau)\).

    \item[(7)] \(\mathcal{D}_{\alpha}(\mathcal{S})\) is closed in convergence in measure and in the \(\alpha\)-topology.

    \item[(8)] If \(\gamma\) is a \(\ast\)-automorphism of \(\mathcal{M}\) such that \(\tau \circ \gamma = \tau\) and \(\alpha \circ \gamma = \alpha\), then \(\gamma(T) \in \mathcal{D}_{\alpha}(\mathcal{S})\) whenever \(T \in \mathcal{D}_{\alpha}(\mathcal{S})\).
\end{enumerate}
\end{theorem}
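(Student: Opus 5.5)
The plan is to prove the items roughly in the order (1), (2), (3), (8), then the key estimate behind (4), then (6), (4), (5), (7). Everything rests on one observation about $d_{\mathcal S}$. If $\tau(P)<d_{\mathcal S}(\varepsilon)$, then by the definition of the supremum there is an admissible $\delta>\tau(P)$. Hence $\alpha(PS)\le\varepsilon$ for every $S\in\mathcal S$, which is (1).

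\emph{Items (2), (3) and (8).} For (2), if $T_1,T_2\in\mathcal D_\alpha(\mathcal S)$ and $0\le t\le 1$, then the triangle inequality gives $\alpha(P(tT_1+(1-t)T_2))\le t\varepsilon+(1-t)\varepsilon=\varepsilon$.

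For (3), the right factor is harmless: every contraction $B$ is a midpoint of two unitaries (or a norm-limit of convex combinations of unitaries), so unitary invariance gives $\alpha(XB)\le\|B\|\,\alpha(X)$. For the left factor, let $Q$ be the range projection of $A^*P$. Then $\tau(Q)$ equals the trace of the range projection of $PA$, which is at most $\tau(P)$ (Murray--von Neumann equivalence of left and right supports in a finite algebra). Since $PA=PAQ$, we get $\alpha(PATB)\le\|PA\|\,\alpha(QT)\,\|B\|\le\varepsilon$.

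For (8), $\gamma^{-1}(P)$ is a projection of the same trace as $P$. Hence $\alpha(P\gamma(T))=\alpha(\gamma(\gamma^{-1}(P)T))=\alpha(\gamma^{-1}(P)T)\le\varepsilon$. This uses that $\alpha\circ\gamma=\alpha$ persists on the extension of $\alpha$ to $\mathcal L^0$. That follows because $\gamma$ is trace preserving, hence a homeomorphism in measure, and it preserves the order $0\le S\le|T|$ used in the sup-definition of $\alpha$.

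\emph{The key estimate.} Let $X=S_n-S$ with $S_n,S\in\mathcal D_\alpha(\mathcal S)$, and write $X=U|X|$ with $U$ unitary. Fix $\eta>0$ and let $P_n=\chi_{[\eta,\infty)}(|X|)$, which commutes with $|X|$. Then
\[
\alpha(X)=\alpha(|X|)\le \alpha(P_n|X|)+\alpha\big((1-P_n)|X|\big)\le \alpha(P_nU^*S_n)+\alpha(P_nU^*S)+\eta .
\]
By unitary invariance, $\alpha(P_nU^*S_n)=\alpha(UP_nU^*S_n)$, and $UP_nU^*$ is a projection of trace $\tau(P_n)$. If $S_n\to S$ in measure, Lemma \ref{Convergence in measure Lemma} gives $\tau(P_n)\to 0$. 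So eventually $\tau(P_n)<d_{\mathcal S}(\varepsilon)$, and then $\alpha(X)\le 2\varepsilon+\eta$. This proves the nontrivial direction of (4). The converse direction is Theorem \ref{alpha convergence implies in measure}, extended by density to $L^\alpha$.

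\emph{Item (6) and the main obstacle.} The main obstacle is that this computation must be legitimate for elements of $\mathcal L^0$, where $\alpha$ is only defined through the sup-formula. So I would prove (6) first, using only elements of $\mathcal M$. Given $T\in\mathcal D_\alpha(\mathcal S)$, truncate: $T_k=T\chi_{[0,k]}(|T|)\in\mathcal M$. By (3), applied with the right contraction $\chi_{[0,k]}(|T|)$ (a bounded-operator argument that still works for $T\in\mathcal L^0$), each $T_k$ lies in $\mathcal D_\alpha(\mathcal S)$. Moreover $T_k\to T$ in measure, since $\tau(\chi_{(k,\infty)}(|T|))\to 0$. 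Now apply the key estimate to $T_k-T_m\in\mathcal M$. The projections $\chi_{[\eta,\infty)}(|T_k-T_m|)$ have small trace for large $k,m$ because $(T_k)$ is Cauchy in measure. So $(T_k)$ is $\alpha$-Cauchy and converges in $L^\alpha$ to some $T'$. By Theorem \ref{alpha convergence implies in measure} and the embedding corollary, $T'=T$, hence $T\in L^\alpha$. Having (6), the key estimate is valid on $\mathcal D_\alpha(\mathcal S)$ by approximation, which completes (4).

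\emph{Items (5) and (7).} Item (5) follows from (4), since both topologies are metrizable. For (7), let $T_n\in\mathcal D_\alpha(\mathcal S)$ with $T_n\to T$ in measure. The key estimate shows that $(T_n)$ is $\alpha$-Cauchy, so its $\alpha$-limit equals $T$. Then $\alpha(PT)=\lim_n\alpha(PT_n)\le\varepsilon$ whenever $\tau(P)<d_{\mathcal S}(\varepsilon)$, so $T\in\mathcal D_\alpha(\mathcal S)$. By (5), closedness in the $\alpha$-topology is the same statement.
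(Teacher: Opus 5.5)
Your proposal is correct and shares the paper's overall architecture: convexity by the triangle inequality, pulling projections back by $\gamma^{-1}$ in (8), and bounded truncations plus an $\alpha$-Cauchy argument landing in $L^\alpha(\mathcal M,\tau)$ for (6) and (7). Two steps, however, are argued differently.

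For the left factor in (3), the paper writes $\frac{n-2}{n}A$ as an average of unitaries $U_1,\dots,U_n$ via Russo--Dye. It bounds each $\alpha(PU_kT)=\alpha(U_k^*PU_kT)$ using $\tau(U_k^*PU_k)=\tau(P)$, then lets $n\to\infty$. You instead take $Q$ to be the right support of $PA$, so $PA=PAQ$ and $\tau(Q)\le\tau(P)$ by the equivalence of left and right supports. This is shorter and needs only $\alpha(XY)\le\|X\|\,\alpha(Y)$. That inequality holds for the sup-extension of $\alpha$ to $\mathcal L^0(\mathcal M,\tau)$ directly from $|XY|\le\|X\|\,|Y|$. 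The paper's averaging, by contrast, silently uses the triangle inequality for $\alpha$ on unbounded operators.

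For (4), the paper first shows $\alpha(S_n)\to0$ for a single sequence in $\mathcal D_\alpha(\mathcal S)$ tending to $0$ in measure, using the projections of Lemma \ref{Convergence in measure Lemma}(3). It then applies this to $\frac12(T_n-T_m)$, which implicitly requires $\frac12(T_n-T_m)\in\mathcal D_\alpha(\mathcal S)$, i.e. convexity plus $-\mathcal D_\alpha(\mathcal S)=\mathcal D_\alpha(\mathcal S)$. Your estimate with the rotated spectral projection $UP_nU^*$ splits $S_n-S$ directly, so it never needs $S_n-S\in\mathcal D_\alpha(\mathcal S)$.

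Your ordering also helps. You prove (6) before (4) and run the estimate only on bounded truncations, where $\alpha$ is unambiguous, then pass to $\mathcal D_\alpha(\mathcal S)$ by approximation. This makes explicit what the paper does implicitly when it deduces (6) from (5) while evaluating $\alpha$ on unbounded elements, and it handles the consistency between the sup-defined $\alpha$ and the norm of $L^\alpha(\mathcal M,\tau)$.

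One small caveat: your ``midpoint of two unitaries'' argument for the right factor does use subadditivity of $\alpha$ on $\mathcal L^0(\mathcal M,\tau)$. To keep (3) airtight for unbounded $T$, pass to adjoints instead. We have $|B^*X^*|\le|X^*|$, and $\alpha(Y^*)=\alpha(Y)$ for the sup-extension because $|Y^*|=V|Y|V^*$ with $V$ unitary; together these give $\alpha(XB)\le\alpha(X)$ for every contraction $B$ using only the monotonicity of the sup formula.
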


\begin{proof} (1). This is obvious.

(2). Suppose \(A, B \in \mathcal{D}_{\alpha}(\mathcal{S})\), \(\varepsilon > 0\), and \(P \in \mathcal{P}(\mathcal{M})\) with \(\tau(P) < d_{\mathcal{S}}(\varepsilon)\), suppose \(0 \leq t \leq 1\). Then
\[
\alpha(P((1 - t)A + tB)) \leq (1 - t)\alpha(PA) + t\alpha(PB) \leq (1 - t)\varepsilon + t\varepsilon = \varepsilon.
\] Therefore \(\mathcal{D}_{\alpha}(\mathcal{S})\) is convex.

(3). It is enough to prove the result when \(\|A\| < 1\) and \(\|B\| < 1\). Choose \(n_{0} \in \mathbb{N}\) with \(n_{0} > 2\) such that \(\|A\| < \frac{n - 2}{n}\). Then, for \(n \geq n_{0}\), \(\left\|\frac{n}{n - 2}A\right\| < \frac{n}{n - 2}\). Thus, by the Russo-Dye theorem [\cite{DR}, Theorem I.8.4], there are unitary operators \(U_{1}, \ldots, U_{n} \in \mathcal{M}\) such that
\[
\frac{n - 2}{n}A = \frac{1}{n}[U_{1} + \cdots + U_{n}].
\]
Suppose \(\varepsilon > 0\) and \(P \in \mathcal{P}(\mathcal{M})\) with \(\tau(P) < d_{\mathcal{S}}(\varepsilon)\). Then, since each \(U_{k}^{*}PU_{k} \in \mathcal{P}(\mathcal{M})\) and \(\tau(U_{k}^{*}PU_{k}) = \tau(P) < d_{\mathcal{S}}(\varepsilon)\), we see
\begin{align*}
\frac{n - 2}{n}\alpha(PAT)& = \alpha(P(A)T) = \alpha\left(P\frac{1}{n}[U_{1} + \cdots + U_{n}]T\right)
\\&\leq \frac{1}{n}\sum_{k=1}^{n}\alpha(PU_{k}T) = \frac{1}{n}\sum_{k=1}^{n}\alpha(U_{k}U_{k}^{*}PU_{k}T) \\&= \frac{1}{n}\sum_{k=1}^{n}\alpha(U_{k}^{*}PU_{k}T)
\leq \frac{1}{n}\sum_{k=1}^{n}\varepsilon = \varepsilon.
\end{align*}
Since \(n \in \mathbb{N}\) was arbitrary, we see that$\alpha(PAT) \leq \varepsilon.$
Hence, \(AT \in \mathcal{D}_{\alpha}(\mathcal{S})\). In a similar fashion, we can show that \(TB \in \mathcal{D}_{\alpha}(\mathcal{S})\).

(4). It follows from Theorem \ref{alpha convergence implies in measure} that \(\alpha\)-convergence implies \(\tau\)-convergence. Now suppose \(S_{1}, S_{2}, \ldots \in \mathcal{D}_{\alpha}(\mathcal{S})\) and \(S_{n} \to 0\) in measure. By Lemma \ref{Convergence in measure Lemma}, there is a sequence \(\{P_{n}\}\) of projections in \(\mathcal{M}\) such that \(\tau(P_{n}) \to 0\) and \(\|(1 - P_{n})S_{n}\| \to 0\). If \(\varepsilon > 0\), then there is an \(N \in \mathbb{N}\) such that when \(n \geq N\), \(\tau(P_{n}) < \delta_{\mathcal{S}}(\varepsilon/2)\) and \(\|(1 - P_{n})S_{n}\| < \varepsilon/2\), which implies
\[
\alpha(S_{n}) \leq \alpha(P_{n}S_{n}) + \alpha((1 - P_{n})S_{n}) \leq \varepsilon/2 + \|(1 - P_{n})S_{n}\| < \varepsilon.
\]
Hence, \(\alpha(S_{n}) \to 0\). Next we suppose \(\{T_{n}\}\) is a sequence in \(\mathcal{D}_{\alpha}(\mathcal{S})\) and \(T \in \mathcal{L}^{0}(\mathcal{M}, \tau)\) with \(T_{n} \xrightarrow{\tau} T\). Noting that $\frac{1}{2}(T_{n} - T_{m})=\frac{1}{2}(T_{n} - T)-\frac{1}{2}(T_{m} - T),$ since \(T_{n} \xrightarrow{\tau} T\) , it follows that $\frac{1}{2}(T_{n} - T_{m})\xrightarrow{\tau} 0.$ By the discussion above, it shows that \(\alpha(T_{n} - T_{m}) \to 0\). Hence,  $\{T_n\}$ is a Cauchy sequence in $L^\alpha(\mathcal M, \tau).$ Then there is an \(A \in L^{\alpha}(\mathcal{M},\tau)\) such that \(\alpha(T_{n} - A) \to 0\), thus, \(T_{n} \xrightarrow{\tau} A\), which means \(A = T\). Therefore $\alpha(T_n-T)\rightarrow 0.$ This completes the proof.

(5). This follows immediately from (4).

(6). Suppose \(T \in \mathcal{D}_{\alpha}(\mathcal{S})\). It follows from part (3) that \(|T| \in \mathcal{D}_{\alpha}(\mathcal{S})\) and that, for every positive integer \(n\), \(\chi_{\{0, n\}}(|T|)|T| \in \mathcal{M} \cap \mathcal{D}_{\alpha}(\mathcal{S})\) and \(\chi_{\{0, n\}}(|T|)|T| \xrightarrow{\tau} |T|\). Thus, by part (5),
\[
\alpha\left(\chi_{[0,n)}\left(|T|\right)|T| - |T|\right) \to 0.
\]
Thus \(|T| \in L^{\alpha}(\mathcal{M}, {\tau})\), which implies \(T \in L^{\alpha}(\mathcal{M}, \tau)\).

(7). We see from part (5) and its proof that \(\mathcal{D}_{\alpha}(\mathcal{S})\) is \(\alpha\)-closed if and only if it is \(\tau\)-closed. Suppose \(\{T_{n}\}\) is a sequence in \(\mathcal{D}_{\alpha}(\mathcal{S})\), \(T \in L^{\alpha}(\mathcal{M}, \tau)\), and \(\alpha(T_{n} - T) \to 0\). Suppose \(\varepsilon > 0\) and \(P\) is a projection in \(\mathcal{M}\) with \(\tau(P) < d_{\mathcal{S}}(\varepsilon)\). Since
\[
0 \leq \alpha(PT_{n} - PT) \leq \|P\|\alpha(T_{n} - T) \to 0,
\]
we see that
\[
\alpha(PT) = \lim_{n \to \infty} \alpha(P_{n}T) \leq \varepsilon.
\]
Thus \(T \in \mathcal{D}_{\alpha}(\mathcal{S})\).

(8). Suppose \(T \in \mathcal{D}_{\alpha}(\mathcal{S})\), \(\varepsilon > 0\), and \(P \in \mathcal{M}\) is a projection such that \(\tau(P) < d_{\mathcal{S}}(\varepsilon)\). Then \(\tau(\gamma^{-1}(P)) = \tau(P) < \varepsilon\), and therefore,
\[
\alpha(P\gamma(T)) = \alpha(\gamma^{-1}(P\gamma(T))) = \alpha(\gamma^{-1}(P)T) \leq \varepsilon.
\]
\end{proof}

The following corollary states that the dominated convergence theorem holds for $L^\alpha(\mathcal M, \tau),$ but fails in the larger space $\mathcal L^\alpha(\mathcal M, \tau).$ This fact shows that two spaces are different, which is a key distinction from the classical $L^p$-cases since $L^p(\mathcal M, \tau)=\mathcal L^p(\mathcal M, \tau)$ for $1\leq p\leq \infty$.

\begin{corollary}
Suppose \(0 \leq T \in \mathcal L^{0}(\mathcal{M}, \tau)\) and \(\alpha(T) < \infty\), but \(T \notin L^{\alpha}(\mathcal{M},\tau)\). Then there is a sequence \(\{P_{n}\}\) in \(\mathcal{P}(\mathcal{M})\) such that \(\tau(P_{n}) \to 0\) and \(\alpha(P_{n}T) \nrightarrow 0\).
\end{corollary}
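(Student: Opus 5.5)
The proof will be by contraposition. If \(\alpha(PT)\to 0\) whenever \(\tau(P)\to 0\), then \(\{T\}\) is \(\alpha\)-equicontinuous. Concretely, this means that for every \(\varepsilon>0\) there is \(\delta>0\) with \(\tau(P)<\delta\Rightarrow\alpha(PT)\le\varepsilon\). If this failed for some \(\varepsilon_0\), we could take \(\delta=1/n\) and obtain projections \(P_n\) with \(\tau(P_n)<1/n\) and \(\alpha(P_nT)>\varepsilon_0\), which is exactly the sequence the corollary asks for. So we assume equicontinuity of \(\mathcal S=\{T\}\) and derive \(T\in L^\alpha(\mathcal M,\tau)\), a contradiction.

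The subtlety is that \(T\) is only assumed to lie in \(\mathcal L^\alpha(\mathcal M,\tau)\), not in \(L^\alpha\). Theorem \ref{DCT} was stated for \(\mathcal S\subset L^\alpha\), so it cannot be quoted directly. I would instead rerun the argument of parts (4) and (6) by hand, with \(\alpha\) extended to \(\mathcal L^0\) via the supremum formula. Let \(A_n=\chi_{[0,n)}(T)T\in\mathcal M\), which satisfies \(0\le A_n\le T\) and \(A_n\to T\) in measure. For \(m>n\), the difference \(A_m-A_n=\chi_{[n,m)}(T)T\) lies in \(\mathcal M\) and equals \(Q_nA_mQ_n\), where \(Q_n=\chi_{[n,\infty)}(T)\).

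The key estimate is
\[
\alpha(A_m-A_n)=\alpha(Q_nA_m)\le \alpha(Q_nT).
\]
This requires the monotonicity \(0\le X\le Y\Rightarrow\alpha(X)\le\alpha(Y)\), together with its extension to \(\mathcal L^0\) through the definition of the extended \(\alpha\). To handle it, write \(Q_nA_m=Q_nA_mQ_n\le Q_nTQ_n=|TQ_n|\). The extended \(\alpha\) of \(TQ_n\) is then computed from \(|TQ_n|\). We also need \(\alpha(Q_nT)=\alpha(TQ_n)=\alpha((TQ_n)^*)\), which uses unitary invariance and the fact that \(|X^*|\) is unitarily equivalent to \(|X|\) (available since \(\mathcal M\) is finite).

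Now \(\tau(Q_n)\le \tau(T)\)-type bounds are unavailable, but \(\tau(Q_n)\to0\) holds simply because \(T\) is \(\tau\)-measurable. Indeed, \(\chi_{[n,\infty)}(T)\to 0\) strongly as \(n\to\infty\), so \(\tau(Q_n)\to0\) by normality of \(\tau\). Equicontinuity then gives \(\alpha(Q_nT)\to0\), so \(\{A_n\}\) is \(\alpha\)-Cauchy in \(\mathcal M\) and converges in \(L^\alpha\) to some \(A\). By Theorem \ref{alpha convergence implies in measure} the sequence also converges in measure to \(A\), and uniqueness of limits in \(\mathcal L^0\) gives \(A=T\). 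Hence \(T\in L^\alpha(\mathcal M,\tau)\), contradicting the hypothesis.

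The main obstacle is justifying the monotonicity and adjoint-invariance of the extended \(\alpha\) on unbounded operators. I would first establish monotonicity on \(\mathcal M\) for positive operators. The standard route uses \(X^{1/2}=CY^{1/2}\) with \(\|C\|\le1\), so that \(X=CYC^*\) and \(\alpha(X)\le\alpha(Y)\) by the Russo–Dye averaging already used in part (3) of Theorem \ref{DCT}. The extension to \(\mathcal L^0\) then follows directly from the supremum definition.
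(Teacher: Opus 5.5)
Your proof is correct and follows essentially the same route as the paper. Both take the spectral tail projections $\chi_{[n,\infty)}(T)$ and show that if $\alpha$ of the corresponding tails of $T$ tended to zero, the bounded truncations $\chi_{[0,n)}(T)T\in\mathcal M$ would form an $\alpha$-Cauchy sequence converging in measure to $T$, forcing $T\in L^\alpha(\mathcal M,\tau)$. You also supply details the paper leaves implicit: the Cauchy estimate via the supremum definition of the extended $\alpha$, and identifying the limit with $T$ via Theorem \ref{alpha convergence implies in measure}. The equicontinuity detour and the adjoint-invariance concern are unnecessary, since $Q_n$ commutes with $T$ and so $|Q_nT|=Q_nT$.
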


\begin{proof}
Let \(Q_{n} = \chi_{[0,n]}(T)\) and let \(P_{n} = 1 - Q_{n}\). Since \(\chi_{[0,n]} \to 1\) in measure in \(L^{\infty}[0,1]\), we have from Theorem \ref{alpha convergence implies in measure} that \(Q_{n} \xrightarrow{\tau} 1\) and thus \(P_{n} \xrightarrow{\tau} 0\) . If \(\alpha(P_{n}T) \to 0\), then we would have
\[
\alpha(T - Q_{n}T) \to 0.
\]
However, \(Q_{n}T = \chi_{[0,n]}(T)T \in \mathcal{M}\), which would imply \(T \in L^{\alpha}(\mathcal{M},\tau)\). Thus \(\alpha(P_{n}T) \nrightarrow 0\).
\end{proof}

The following result was proved in \cite{CHS} for continuous unitarily invariant norms \(\alpha\) such that \(\alpha \geq \|\cdot\|_{1}\).

\begin{corollary}
If \(\alpha\) is a continuous unitarily invariant norm on \(\mathcal{M}\), and \(\mathcal{B}\) is the closed unit ball of \(\mathcal{M}\), then the \(\alpha\)-topology and the topology of convergence in measure coincide on \(\mathcal{B}\). Also, if \(\mathcal{S} \subset \mathcal{M}\) is an \(\alpha\)-closed linear subspace of \(L^{\alpha}(\mathcal{M}, \tau)\), then \(S \cap \mathcal{M}\) is weak*-closed in \(\mathcal{M}\).
\end{corollary}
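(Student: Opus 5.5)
Both assertions will be derived from Theorem \ref{DCT}, applied to a suitable $\alpha$-equicontinuous set. For the first assertion take $\mathcal{S}=\{1\}$. Since $\alpha$ is continuous, for every $\varepsilon>0$ there is $\delta>0$ with $\alpha(P)\le\varepsilon$ whenever $\tau(P)<\delta$, so $\{1\}$ is $\alpha$-equicontinuous (equivalently, apply Lemma \ref{alpha equicontinuous} to $T=1$). Next I would check that $\mathcal{B}\subset\mathcal{D}_\alpha(\{1\})$. This follows from part (3) of Theorem \ref{DCT} with $T=1$, $A\in\mathcal{B}$ and $B=1$, since $1\in\mathcal{D}_\alpha(\{1\})$ by part (1). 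Directly, one also has $\alpha(PA)\le\alpha(P)\|A\|\le\alpha(P)$ whenever $\|A\|\le1$, although this needs $\alpha(XY)\le\alpha(X)\|Y\|$, which is where part (3) is used.

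Parts (4) and (5) of Theorem \ref{DCT} then say that the $\alpha$-topology and the measure topology coincide on $\mathcal{D}_\alpha(\{1\})$, hence on the subset $\mathcal{B}$. Both topologies are metrizable, so the sequential statement in (4) is enough. Because $\mathcal{B}$ is convex and balanced, for $A,A_n\in\mathcal{B}$ the differences $(A_n-A)/2$ also lie in $\mathcal{B}$. This lets me apply (4) to differences without leaving the dominated set, as in the proof of (4).

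For the second assertion, let $\mathcal{S}$ be an $\alpha$-closed subspace and $\mathcal{S}_0=\mathcal{S}\cap\mathcal{M}$. By Krein--Smulian, applied to the predual, it suffices to show that $\mathcal{S}_0\cap\mathcal{B}$ is weak*-closed. Take a net $A_\lambda\in\mathcal{S}_0\cap\mathcal{B}$ converging weak* to $A\in\mathcal{B}$. The plan is to upgrade this to measure convergence of convex combinations. Since $\mathcal{M}\subset L^2(\mathcal{M},\tau)$ and weak* convergence of a bounded net implies weak convergence in $L^2$, Mazur's lemma gives convex combinations $B_n$ of the $A_\lambda$ (so $B_n\in\mathcal{S}_0\cap\mathcal{B}$) with $\|B_n-A\|_2\to0$. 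Then $\tau(\chi_{[\varepsilon,\infty)}(|B_n-A|))\le\varepsilon^{-2}\|B_n-A\|_2^2\to0$, so $B_n\to A$ in measure by Lemma \ref{Convergence in measure Lemma}. By the first part, $\alpha(B_n-A)\to0$, and since $\mathcal{S}$ is $\alpha$-closed, $A\in\mathcal{S}\cap\mathcal{M}$.

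I expect the main obstacle to be this passage from weak* convergence to convergence in measure, that is, justifying that the $L^2$-weak closure of a convex bounded set equals its $L^2$-norm closure inside $\mathcal{B}$. One also needs the fact that the unit ball is closed in measure, which Theorem \ref{DCT}(7) supplies. The remaining steps are direct applications of Theorem \ref{DCT}.
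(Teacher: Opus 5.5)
Your proposal is correct and follows the paper's route. For the first claim you use $\mathcal{B}\subset\mathcal{D}_\alpha(\{1\})$ (via parts (1) and (3)) followed by parts (4)--(5) of Theorem~\ref{DCT}; for the second you use Krein--Smulian together with the fact that the convex set $\mathcal{S}\cap\mathcal{B}$ is $\|\cdot\|_2$-closed iff weakly $\|\cdot\|_2$-closed iff weak*-closed, and your Mazur/Chebyshev argument simply makes this explicit. (Your appeal to Theorem~\ref{DCT}(7) is unnecessary and does not quite say what you use it for, since (7) concerns $\mathcal{D}_\alpha(\mathcal{S})$ rather than $\mathcal{B}$; in any case $A\in\mathcal{B}$ already follows from weak*-closedness of $\mathcal{B}$.)
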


\begin{proof}
The first statement follows from the fact that \(\mathcal{B} \subset \mathcal{D}_{\alpha}(\{1\})\) and from parts (3) and (5) of Theorem \ref{DCT}. The second statement in the corollary follows from the Krein-Shmulyan theorem and the fact that, since \(\mathcal{B} \cap \mathcal{M}\) is convex, it is \(\|\cdot\|_{2}\)-closed if and only if it is weakly \(\|\cdot\|_{2}\)-closed if and only if it is weak*-closed.
\end{proof}

The following example further illustrates that the dominating condition in Theorem \ref{DCT} is substantially weaker than the classical DCT.

\begin{example}
Suppose \(\mathcal{M}\) is a \(II_{1}\) factor and \(n \in \mathbb{N}\) and \(P_{1} \in \mathcal{M}\) is a projection with \(\tau(P_{1}) = \frac{1}{n}\). Then \(\mathcal{D}(\{n^{2}P_{1}\})\) contains all operators \(T \in \mathcal{M}\) with \(\|T\| \leq n\). To see this, we find projections \(P_{2}, \ldots, P_{n} \in \mathcal{M}\) such that \(\tau(P_{k}) = \frac{1}{n}\) for each \(k\) and \(\sum_{k=1}^{n} P_{k} = 1\). Since \(\mathcal{M}\) is a \(II_{1}\) factor, each \(n^{2}P_{k}\) is unitarily equivalent to \(n^{2}P_{1}\), so
\[
n = \frac{1}{n} \sum_{k=1}^{n} n^{2} P_{k} \in \mathcal{D}_{\alpha}(\{n^{2}P_{1}\}).
\]
If \(T \in \mathcal{M}\) and \(\|T\| = n\), then \(T = \left(\frac{T}{n}\right)n \in \mathcal{D}_{\alpha}(\{n^{2}P_{1}\})\). Whereas if \(|T| \leq n^{2}P_{1}\), we have \(|T| = P_{1}|T|P_{1}\). Thus, our notion of domination is much less restrictive than the notion in the classical version of the dominated convergence theorem.
\end{example}

The automorphisms on \(\mathcal{M}\) for which statement (8) in Theorem \ref{DCT} holds for every unitarily invariant norm \(\alpha\) are precisely those that fix the center \(\mathcal{Z}(\mathcal{M})\). Note that the von Neumann algebra \(\mathcal{M}\) in the following Theorem does not need to act on a separable Hilbert space.

\begin{theorem}
Suppose \(\mathcal{M}\) is a finite von Neumann algebra and \(\gamma\) is a \(*\)-automorphism of \(\mathcal{M}\). The following are equivalent:

\begin{enumerate}
    \item[(1)] \(\gamma(S) = S\) for every \(S \in \mathcal{Z}(\mathcal{M})\), i.e., \(\gamma\) fixes \(\mathcal{Z}(\mathcal{M})\).

    \item[(2)] \(\Phi \circ \gamma = \Phi\), where \(\Phi: \mathcal{M} \rightarrow \mathcal{Z}(\mathcal{M})\) is the center-valued trace on \(\mathcal{M}\).

    \item[(3)] For every \(A = A^{*} \in \mathcal{M}\), there is a sequence \(\{W_{n}\}\) of unitary operators in \(\mathcal{M}\) such that \(\|W_{n}^{*}AW_{n} - \gamma(A)\| \to 0\).

    \item[(4)] \(\alpha(\gamma(T)) = \alpha(T)\) for every unitarily invariant norm \(\alpha\) on \(\mathcal{M}\).

    \item[(5)] \(\tau \circ \gamma = \tau\) for every normal tracial state \(\tau\) on \(\mathcal{M}\).
\end{enumerate}
\end{theorem}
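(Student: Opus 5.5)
We prove the equivalences by the cycle \((1)\Rightarrow(2)\Rightarrow(3)\Rightarrow(4)\Rightarrow(5)\Rightarrow(1)\), using the standard properties of the center-valued trace \(\Phi\) (Kadison): \(\Phi\) is the unique normal, positive, faithful, \(\mathcal{Z}(\mathcal{M})\)-linear map with \(\Phi(AB)=\Phi(BA)\) and \(\Phi(Z)=Z\) for \(Z\in\mathcal{Z}(\mathcal{M})\). We also use the fact that two projections \(P,Q\) in a finite von Neumann algebra are Murray--von Neumann equivalent if and only if \(\Phi(P)=\Phi(Q)\).

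For \((1)\Rightarrow(2)\), consider \(\Phi'=\gamma^{-1}\circ\Phi\circ\gamma\). It is again a normal, faithful, positive, center-valued trace. Because \(\gamma\) fixes the center, \(\Phi'\) is \(\mathcal{Z}(\mathcal{M})\)-linear and fixes \(\mathcal{Z}(\mathcal{M})\). By uniqueness, \(\Phi'=\Phi\), and since \(\gamma^{-1}\) fixes \(\Phi(\mathcal{M})\subset\mathcal{Z}(\mathcal{M})\), this gives \(\Phi\circ\gamma=\Phi\).

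For \((2)\Rightarrow(3)\) we imitate the proof of \((2)\Rightarrow(3)\) in the equimeasurability theorem of Section 3, replacing \(\tau\) by \(\Phi\). Let \(A=A^*\) and \(n\in\mathbb{N}\), and form the finite spectral partition \(E_k=\chi_{[k/2^n,(k+1)/2^n)}(A)\). Since \(\gamma\) is a \(*\)-automorphism, \(\gamma(E_k)=\chi_{[k/2^n,(k+1)/2^n)}(\gamma(A))\). By (2), \(\Phi(\gamma(E_k))=\Phi(E_k)\), so \(E_k\sim\gamma(E_k)\) via partial isometries \(V_k\). Both families are orthogonal and sum to \(1\), so \(W_n=\sum_k V_k\) is a unitary with \(W_nE_kW_n^*=\gamma(E_k)\). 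This yields \(\|W_n A W_n^*-\gamma(A)\|\le 2^{1-n}\), and we relabel \(W_n\) to match the statement.

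For \((3)\Rightarrow(4)\), apply (3) to self-adjoint \(A\): exactly as in \((3)\Rightarrow(4)\) of the Section 3 theorem, \(\alpha\le\|\cdot\|\) together with unitary invariance gives \(\alpha(\gamma(A))=\alpha(A)\). For general \(T\), use the polar decomposition \(T=U|T|\) with \(U\) unitary (\(\mathcal{M}\) is finite). Then \(\gamma(T)=\gamma(U)\gamma(|T|)\) with \(\gamma(U)\) unitary, so
\[
\alpha(\gamma(T))=\alpha(\gamma(|T|))=\alpha(|T|)=\alpha(T).
\]

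For \((4)\Rightarrow(5)\), fix a normal tracial state \(\tau\). For \(A\ge 0\) we have \(\tau(A)=\|A\|_1\) when \(\tau\) is faithful, and \(\|\cdot\|_1\) is unitarily invariant. If \(\tau\) is not faithful, we instead apply (4) to the norm \(\tau(|\cdot|)+\varepsilon\|\cdot\|_{1,\rho}\), where \(\rho\) is a faithful normal tracial state. This is a unitarily invariant norm, and letting \(\varepsilon\to0\) handles positive operators \(A\), since \(\gamma(A)\ge0\). The equality on all of \(\mathcal{M}\) then follows by linearity. In the nonseparable, non-\(\sigma\)-finite case a faithful normal trace may fail to exist; there we use the fact that the normal tracial states separate points and work with seminorms of the form \(\tau(|\cdot|)+\alpha_0\), where \(\alpha_0\) is the operator norm scaled small. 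The operator norm is itself unitarily invariant, so the same limiting argument goes through.

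For \((5)\Rightarrow(1)\), let \(Z=Z^*\in\mathcal{Z}(\mathcal{M})\). For every normal tracial state \(\tau\) and every \(B\in\mathcal{M}\), the functional \(\tau(Z_0\,\cdot)\) with \(0\le Z_0\in\mathcal{Z}\) is, after normalization, again a normal trace. Since \(\gamma(\mathcal{Z})=\mathcal{Z}\), applying (5) to these traces gives \(\tau(\gamma(Z)B)=\tau(ZB)\) for all central \(B\). Take \(B\) to be a spectral projection of \(\gamma(Z)-Z\), which is central. Normal tracial states separate central positive elements (they are of the form \(\varphi\circ\Phi\) with \(\varphi\) a normal state on \(\mathcal{Z}\)), so \(\gamma(Z)=Z\).

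The main obstacle is \((2)\Rightarrow(3)\), specifically justifying the comparison step: equality of center-valued traces implies Murray--von Neumann equivalence in a general finite algebra, and the partial isometries assemble into a unitary even when there are infinitely many pieces (for unbounded index sets we first truncate, since \(A\) is bounded and so only finitely many \(k\) occur). A secondary technicality is producing enough normal tracial states in \((4)\Rightarrow(5)\) and \((5)\Rightarrow(1)\) when \(\mathcal{M}\) is not \(\sigma\)-finite. We handle this by writing every normal tracial state as \(\varphi\circ\Phi\) and working with those.
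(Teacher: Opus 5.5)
Your proposal is correct. It follows the same cycle $(1)\Rightarrow(2)\Rightarrow(3)\Rightarrow(4)\Rightarrow(5)\Rightarrow(1)$ as the paper, and the differences are local.

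The genuinely different step is $(2)\Rightarrow(3)$. The paper invokes \cite{LHL} for the approximate unitary equivalence of the unital $*$-homomorphisms $\mathrm{id}|_{C^*(A)}$ and $\gamma|_{C^*(A)}$, which agree after composing with $\Phi$. You instead give a self-contained proof: a finite spectral partition of $A$, the comparison theorem $\Phi(E)=\Phi(F)\iff E\sim F$ in a finite algebra, and assembly of the partial isometries into a unitary. This is the center-valued analogue of the paper's own Section~3 argument. It needs no separability and gives the explicit rate $2^{1-n}$, and for a single self-adjoint $A$ nothing more is required.

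In $(1)\Rightarrow(2)$ you use uniqueness of the center-valued trace rather than Dixmier averaging directly. This is the same idea one level up, since uniqueness is itself proved by averaging. Note, though, that $\gamma^{-1}\circ\Phi\circ\gamma$ is $\mathcal{Z}$-linear and fixes $\mathcal{Z}$ for \emph{every} automorphism. So your argument shows $\Phi\circ\gamma=\gamma\circ\Phi$ unconditionally, and hypothesis (1) is used only in the last line, not where you attribute it.

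Your $(3)\Rightarrow(4)$, which passes through $W_n^*AW_n$ via the triangle inequality, is the correct form of the paper's estimate. The displayed chain in the paper writes an equality where only $\le$ holds. It also equates $\||T|-\gamma(|T|)\|$ with $\lim_n\|U_n^*|T|U_n-\gamma(|T|)\|$, which fails, e.g.\ for inner $\gamma$. For non-normalized $\alpha$, use $\alpha\le\alpha(1)\|\cdot\|$ in place of $\alpha\le\|\cdot\|$.

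Two of your detours can be dropped:
\begin{enumerate}
\item[(a)] In $(4)\Rightarrow(5)$, the norm $\tau(|\cdot|)+\|\cdot\|$ (the paper's choice, up to the factor $\frac12$) is unitarily invariant whether or not $\tau$ is faithful. Since $\|\gamma(A)\|=\|A\|$, it gives $\tau(\gamma(A))=\tau(A)$ for $A\ge0$ at once. No auxiliary faithful $\rho$, no $\varepsilon\to0$, and no case split are needed.
\item[(b)] In $(5)\Rightarrow(1)$, you already observe that normal tracial states restrict to all normal states of $\mathcal{Z}$. Then $\tau(\gamma(Z)-Z)=0$ for all of them forces $\gamma(Z)=Z$ directly, which is the paper's argument; the detour through $\tau(Z_0\,\cdot)$ and spectral projections is unnecessary.
\end{enumerate}
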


\begin{proof}
\((1) \Longrightarrow (2)\). Suppose \(\gamma \in \text{Aut}(\mathcal{M})\) and \(\gamma(S) = S\) for every \(S \in \mathcal{Z}(\mathcal{M})\). Suppose \(A = A^{*} \in \mathcal{M}\). Suppose \(T \in C^{*}(A)\). Suppose \(\varepsilon > 0\). It follows from Dixmier's averaging theorem \cite{Dixmier} that we can choose unitary elements \(U_{1}, \ldots, U_{n} \in \mathcal{M}\) such that
\[
\left\|\Phi(A) - \frac{1}{n}\sum_{k=1}^{n} U_{k}^{*}AU_{k}\right\| < \varepsilon.
\]
Since \(\gamma(\Phi(A)) = \Phi(A)\), we have
\[
\left\|\Phi(A) - \frac{1}{n}\sum_{k=1}^{n} \gamma(U_{k}^{*})\gamma(A)\gamma(U_{k})\right\| < \varepsilon.
\]
Thus, applying \(\Phi\), we obtain
\[
\|\Phi(A) - \Phi(\gamma(A))\| = \left\|\Phi\left(\Phi(A) - \frac{1}{n}\sum_{k=1}^{n} \gamma(U_{k}^{*})\gamma(A)\gamma(U_{k})\right)\right\| < \varepsilon.
\]
Since \(\varepsilon > 0\) was arbitrary, we have \(\Phi(\gamma(A)) = \Phi(A)\).

\((2) \Longrightarrow (3)\). Suppose \(\Phi \circ \gamma = \Phi\). Then \(\text{id}|_{C^{*}(A)}, \gamma|_{C^{*}(A)}: C^{*}(A) \rightarrow \mathcal{M}\) are unital \(*\)-homomorphisms and \(\Phi \circ \text{id}|_{C^{*}(A)} = \Phi \circ \gamma|_{C^{*}(A)}\). It follows from \cite{LHL} that there is a sequence \(\{W_{n}\}\) of unitary operators in \(\mathcal{M}\) such that \(\|W_{n}^{*}AW_{n} - \gamma(A)\| \to 0\).

\((3) \Longrightarrow (4)\). Suppose (3) holds and \(\alpha\) is any unitarily invariant norm on \(\mathcal{M}\). If \(T \in \mathcal{M}\), we can choose a sequence \(\{U_{n}\}\) of unitary operators in \(\mathcal{M}\) such that \(\|U_{n}^{*}|T|U_{n} - \gamma(|T|)\| \to 0\). Then
\begin{align*}
|\alpha(T) - \alpha(\gamma(T))| &= |\alpha(|T|) - \alpha(\gamma(|T|))| = \alpha(|T| - \gamma(|T|)) \\&\leq \||T| - \gamma(|T|)\| = \lim_{n \to \infty} \|U_{n}^{*}|T|U_{n} - \gamma(|T|)\| = 0,
\end{align*}
which means \(\alpha(\gamma(T)) = \alpha(T)\).

\((4) \Longrightarrow (5)\). Suppose (4) holds and \(\tau\) is a normal tracial state on \(\mathcal{M}\). Then the norm \(\alpha(S) = \frac{1}{2}\tau(|S|) + \frac{1}{2}\|S\|\) defines a unitarily invariant norm on \(\mathcal{M}\). Thus, for any \(0 \leq S \in \mathcal{M}\),
\[
\frac{1}{2}\tau(\gamma(S)) + \frac{1}{2}\|\gamma(S)\| = \frac{1}{2}\tau(S) + \frac{1}{2}\|S\|.
\]
Since \(\|\gamma(S)\| = \|S\|\), we have \(\tau(S) = (\tau \circ \gamma)(S)\) for every \(S \geq 0\). Hence \(\tau = \tau \circ \gamma\).

\((5) \Longrightarrow (1)\) Suppose $\tau\circ \gamma=\tau$ for every normal tracial state $\tau$ on $\mathcal M.$ Take any central element $S\in \mathcal Z(M) ,$ set $D=\gamma(S)-S.$ Since $\gamma $ is an automorphism, it follows that  $D\in \mathcal Z(M).$ Thus $\tau(D)=\tau(\gamma(S)-S)=\tau(\gamma S)-\tau(S)=0.$ Noting that normal tracial state $\tau$ separates points in the center of a finite von Neumann algebra, which implies $D=0.$ Hence $\gamma(S)=S$ for all $S\in \mathcal Z(M).$ This completes the proof.
\end{proof}

 Our contribution of this result is to add a new norm-theoretic criterion to the earlier works of Kadison and Haagerup on the structure of automorphisms:
the class of center-preserving $*$-automorphisms (equivalently, $*$-automorphisms leaves trace invariant) coincides with those $*$-automorphisms preserving  every unitarily invariant norm on $\mathcal M$, which connects Dixmier's averaging theorem, spectral measure theory and norm invariance on von Neumann algebras.

\end{document}